\documentclass{article}
\usepackage{amsmath}
\usepackage{amsthm}
\usepackage{amstext}
\usepackage{amssymb}
\usepackage{dsfont}
\usepackage{graphics}
\usepackage{graphicx}
\usepackage{color}
\usepackage{pdfrender}
\usepackage{mathtools}
\usepackage{tikz}
\usepackage[title]{appendix}
\usepackage[nottoc,numbib]{tocbibind}

\newtheorem{theorem}{Theorem}[section]

\newtheorem{lemma}[theorem]{Lemma}

\newtheorem{innercustomthm}{Theorem}
\newenvironment{customthm}[1]
  {\renewcommand\theinnercustomthm{#1}\innercustomthm}
  {\endinnercustomthm}
\newcommand{\keywords}[1]{\textbf{Keywords:}\quad #1}

\def\multiset#1#2{\ensuremath{\left(\kern-.3em\left(\kern-.1em\genfrac{}{}{0pt}{}{#1}{#2}\kern-.1em\right)\kern-.3em\right)}}
\def\dispmultiset#1#2{\ensuremath{\left(\kern-.5em\left(\kern-.25em\genfrac{}{}{0pt}{}{#1}{#2}\kern-.25em\right)\kern-.5em\right)}}

\newcommand{\cupdot}{\mathbin{\mathaccent\cdot\cup}}

\begin{document}
\pdfrender{StrokeColor=black,TextRenderingMode=2,LineWidth=0.2pt}
\begin{center}
\Large The number of non-isomorphic arithmetic expressions that can be constructed using $+$, $-$, $\times$ and $\div$\\[0.2cm]
\normalsize Boaz Cohen\\
\verb|arctanx@gmail.com|\\[0.2cm]
Department of Computer Science,\\
The Academic College of Tel-Aviv, \\
Rabenu Yeruham St., P.O.B 8401 Yaffo, 6818211, Israel\\
\end{center}
\begin{abstract}
The goal of this paper is to count the number of distinct functions of $n$ variables, up to permutation of the variables, that can be constructed using each variable exactly once, without constants, using only the operations of addition, subtraction, multiplication, and division. We refer to such a function as an \textit{arithmetic expression}. Under this definition, two expressions are \textit{identical} if they represent the same rational function; for example, $x_1-x_2-x_3$ and $x_1-(x_2+x_3)$ are identical arithmetic expressions, as are $x_1(x_2+x_3)$ and $(x_2+x_3)x_1$. Two arithmetic expressions are said to be \textit{isomorphic} if one can be obtained from the other by a permutation of the variables. For example, $(x_1-x_2)/x_3$ and $(x_2-x_3)/x_1$ are isomorphic. The first few values of the number of non-isomorphic arithmetic expressions with $n$ variables are:
\[
1,4,18,93,500,2844,16621,99674,608448,\dots
\]
In order to accomplish this enumeration, we classify the set of all arithmetic expressions into 12 disjoint categories. Counting all non-isomorphic expressions in each category allows us to obtain the total required quantity.
\end{abstract}
\keywords{arithmetic expression, projectively extended real line, Generalized multivariate functions, multilinear polynomials}
%
\section{Introduction}
A well-known family of puzzles involves generating a specific target value using a given list of numbers and basic arithmetic operators. A challenging member of this family is the ``21-puzzle":
\begin{quote}
\textit{Obtain the number $21$ using the numbers $1,5,6,7$, where each of the numbers may be used only once together with the four basic binary arithmetic operations $+,-,\times ,\div$ and parentheses.}
\end{quote}
The answer to this puzzle is given in Section~5 of this study. A brute-force approach to solving this kind of puzzle would be to iterate through all possible \textit{arithmetic expressions} (abbreviated as AE's). These are, loosely speaking, all functions consisting of $n$ variables defined by means of the basic binary arithmetic operators $+$, $-$, $\times$, and $\div$ together with parentheses, where each of the $n$ variables may be used only once. Under this definition, we distinguish between the syntactic form of an expression and the semantic function it represents; two expressions are considered \textit{identical} if they represent the same rational function. For example, $x_1-x_2-x_3$ and $x_1-(x_2+x_3)$ are identical AE's, as are $x_1(x_2+x_3)$ and $(x_2+x_3)x_1$. Crucial to this definition is that these four operations are used strictly as binary operators. We do not permit unary operations, such as negation ($-\blacksquare$) or the multiplicative inverse ($\blacksquare^{-1}$), unless the resulting function \textit{can} be represented using only binary operators. For instance, the function $-(x_1x_2^{-1}-x_3x_4)$ qualifies as an AE because it can be rewritten as $x_3x_4-x_1/x_2$, which does not use negation or the multiplicative inverse operator. Examples of such AE's with four variables include:
\[
(x_1+x_2)(x_3-x_4)\qquad x_1+\frac{x_2x_3}{x_4}\qquad \frac{x_1}{x_2+x_3+x_4}\qquad\text{etc.}
\]
In contrast, it can be shown that $(x_1-x_2)^{-1}$ and $-x_1-x_2x_3$ are not AE's, as they cannot be expressed solely through the four permitted binary operations.

In this approach, to decrease search time, it is reasonable to ignore the variables themselves and focus only on the form of the function. For example, the functions $f(x_1,x_2,x_3)=x_1+x_2-x_3$ and $g(x_1,x_2,x_3)=x_1-x_2+x_3$ are,
of course, different, but they should not be distinguished as distinct forms since they describe, in a certain sense, the same operation: namely, subtracting one variable from the sum of the other two. Formally, two AE's are said to be \textit{isomorphic} if one can be obtained from the other by a permutation of the variables. For example, the following AE's are isomorphic:
\[
f(x_1,x_2,x_3,x_4)=\frac{x_1}{x_2-x_3}+x_4\qquad
g(x_1,x_2,x_3,x_4)=x_2-\frac{x_4}{x_3-x_1}
\]
Indeed, by choosing the permutation $\sigma=(x_1\ x_2\ x_4)$, we obtain
\[
\begin{split}
  g(\sigma(x_1),\sigma(x_2),\sigma(x_3),\sigma(x_4))
  &=g(x_2,x_4,x_3,x_1)\\
  &=x_4-\frac{x_1}{x_3-x_2}=\frac{x_1}{x_2-x_3}+x_4=f(x_1,x_2,x_3,x_4).
\end{split}
\]
The goal of this paper is to count the number of non-isomorphic AE's consisting of $n$ variables. For example, in the case of two variables, there are $4$ non-isomorphic AE's, namely
\[
x_1+x_2 \qquad x_1-x_2 \qquad x_1x_2 \qquad \dfrac{x_1}{x_2}
\]
In the case of three variables, there are $18$ non-isomorphic AE's, which are listed below:
\[
\begin{array}{cccccc}
 x_1+x_2+x_3 & x_1x_2x_3 & \dfrac{x_1+x_2}{x_3} & \dfrac{x_1}{x_2+x_3} & \dfrac{x_1}{x_2x_3}  & x_1x_2+x_3 \\[0.4cm]
 x_1+x_2-x_3 & (x_1+x_2)x_3 & \dfrac{x_1-x_2}{x_3} & \dfrac{x_1}{x_2}+x_3 & \dfrac{x_1x_2}{x_3}  & x_1x_2-x_3\\[0.4cm]
 x_1-x_2-x_3 & (x_1-x_2)x_3    & \dfrac{x_1}{x_2-x_3} & \dfrac{x_1}{x_2}-x_3 & x_1-\dfrac{x_2}{x_3} & x_1-x_2x_3\\[0.4cm]
\end{array}
\]
As we shall see in this paper, further values of this sequence are~\cite{Sloane_A393077}
\[
1,4,18,93,500,2844,16621,99674,608448,\ldots
\]
To the best of our knowledge, this problem has not been addressed so far. However, the problem of the number of AE's which are generated only with $+$ and $\times$ is extensively discussed in the literature and in a variety of different contexts. Riordan and Shannon~\cite{Riordan} and {\L}omnicki~\cite{Lomnicki} investigated this problem and, in particular, gave recurrences to compute the number of non-isomorphic AE's generated only with $+$ and $\times$. Every such AE corresponds to the so-called \textit{two-terminal series-parallel networks} with $n$ edges. These objects appear in the study of electrical networks built of switches and resistances, as well as in the investigation of so-called event networks. See~\cite{Duffin,Isokawa1,Isokawa2,MacMahon1,MacMahon2,Moon}. For example, there are $10$ such AE's with $4$ variables, namely
\[
\begin{array}{cc}
  x_1x_2x_3x_4 &  x_1+x_2+x_3+x_4\\
  x_1x_2(x_3+x_4) & x_1+x_2+x_3x_4 \\
  x_1(x_2+x_3+x_4) & x_1+x_2x_3x_4 \\
  x_1(x_2+x_3x_4) & x_1+x_2(x_3+x_4)\\
  (x_1+x_2)(x_3+x_4) & x_1x_2+x_3x_4 \\
\end{array}
\]
The first nine values of this sequence are $1,2,4,10,24,66,180,522,1532$. For more values, see~\cite{Sloane_A000084}.

This paper is structured as follows: In Section~2, we give the required notation and definitions needed in our analysis. In Section~3, we present the fundamental theorem of arithmetic expressions, which describes, in a certain sense, how AE's can be uniquely classified. This theorem is the main tool which allows us to perform the enumeration of non-isomorphic AE's. The rigorous proofs of the fundamental theorem have been put in Appendix A at the end of the paper. Our main result, in which we give explicit formulas for the number of non-isomorphic AE's, is presented in Section~4.
\section{Basic concepts and definitions}
In this section, we give the basic definitions and notation underlying our paper. Additionally, we describe the tools that we shall use in order to prove our main theorems.
\subsection{Preliminaries: the projectively extended real line}
In this article, it will be convenient to conduct our discussion in the framework of the \textit{projectively extended real line} $\widehat{\mathds{R}}\coloneqq\mathds{R}\cup\{\infty\}$, which is the extension of the field of real numbers $\mathds{R}$ by the point of infinity $\infty$. The connection of $\infty$ with the finite real numbers is established by setting $x+\infty=\infty+x=\infty$ for all finite $x$, and $x\cdot\infty=\infty\cdot x=\infty$ for all $x\neq0$, including $x=\infty$. We further define $x-\infty=\infty-x=\infty$ for all finite $x$. It is impossible, however, to define $\infty+\infty$, $\infty-\infty$, and $0\cdot\infty$ without violating the laws of arithmetic. Unlike most mathematical models of numbers, this structure allows division by zero, so $x/0=\infty$ for $x\neq0$ and $x/\infty=0$ for $x\neq\infty$. The expressions $0/0$ and $\infty/\infty$ are left undefined. Topologically, $\widehat{\mathds{R}}$ is homeomorphic to a circle. This can be visualized by a map that sends every point $a\neq\infty$ on the circle, along the line connecting it with $\infty$, to the equatorial line at the point $A$.
\begin{center}
\begin{tikzpicture}
        \draw (0,0) circle (1);
        \draw  (-4,-1)--(4,-1);
        \node at (0,1) {\small $\bullet$};
        \node at (0,-1) {\small $\bullet$};
        \node at (0,1.25) {\small $\infty$};
        \node at (1,-1.25) {\small $A$};
        \node at (0,-1.25) {\small $0$};
        \node at (2.6,-1.25) {\small $B$};
        \node at (0.6,-0.55) {\small $a$};
        \node at (1.15,0.4) {\small $b$};
        \node at (4.2,-1) {\small $\mathds{R}$};
        \draw[very thick, red] ({2*cos(-35)/(1-sin(-35))},-1)--({2*cos(15)/(1-sin(15))},-1);
        \draw[thick,dotted](0,1)--({cos(15)},{sin(15)})--({2*cos(15)/(1-sin(15))},-1);
        \draw[thick,dotted](0,1)--({cos(-35)},{sin(-35)})--({2*cos(-35)/(1-sin(-35))},-1);
        \draw[very thick, red] ({cos(15)},{sin(15)}) arc (15:-35:1);
        \filldraw[white] ({cos(15)},{sin(15)}) circle(2pt);\draw[red] ({cos(15)},{sin(15)}) circle(2pt);
        \filldraw[white] ({cos(-35)},{sin(-35)}) circle(2pt);\draw[red] ({cos(-35)},{sin(-35)}) circle(2pt);
        \filldraw[white] ({2*cos(15)/(1-sin(15))},-1) circle(2pt);\draw[red] ({2*cos(15)/(1-sin(15))},-1) circle(2pt);
        \filldraw[white] ({2*cos(-35)/(1-sin(-35))},-1) circle(2pt);\draw[red] ({2*cos(-35)/(1-sin(-35))},-1) circle(2pt); circle(2pt);
\end{tikzpicture}
\end{center}
The tools of calculus can be used to analyze functions over $\widehat{\mathds{R}}$. The definitions are motivated by the topology of this space. A \textit{neighbourhood} of a point $a\in\widehat{\mathds{R}}$ may be viewed as any open arc on the circle that contains $a$. If $f:D\to\widehat{\mathds{R}}$ is a function, where $D\subseteq\widehat{\mathds{R}}$, and if $L,a\in\widehat{\mathds{R}}$, then $\lim_{x\to a}f(x)=L$ if for every neighbourhood $\mathcal{N}$ of $L$ there exists a punctured neighbourhood $\mathcal{M}$ of $a$ such that $f(x)\in \mathcal{N}$ for every $x\in \mathcal{M}$. We say that $f$ is \textit{continuous at} $a\in D$ if $\lim_{x\to a}f(x)=f(a)$. For example, according to these definitions, the reciprocal function $f(x)=1/x$ is continuous at every $a\in\widehat{\mathds{R}}$. In fact, every rational function can be prolonged, in a unique way, to a function from $\widehat{\mathds{R}}$ to $\widehat{\mathds{R}}$ that is continuous on $\widehat{\mathds{R}}$. Indeed, since every rational function can be expressed as a finite composition $f_1\circ f_2\circ\cdots \circ f_n$, where each $f_i$ is either the reciprocal function or a polynomial, it suffices to verify that every  polynomial can be prolonged to a continuous function over all of $\widehat{\mathds{R}}$. This can be done by defining $f(\infty)=\infty$ for every non-constant polynomial. For example, $f(x)=x^2-x$ is a polynomial which is initially undefined at $\infty$. Nevertheless, since $\lim_{x\to\infty}(x^2-x)=\infty$, it can be uniquely prolonged to the function
\[
f(x)=
\begin{cases}
  x^2-x & x\neq\infty\\
  \infty & x=\infty
\end{cases}
\]
which is continuous on $\widehat{\mathds{R}}$. An important consequence of this discussion regards the continuity of the derivative of rational functions: since every derivative of a rational function is again rational, it follows that \textit{every rational function can be prolonged to a function from $\widehat{\mathds{R}}$ to $\widehat{\mathds{R}}$ that has a continuous derivative on $\widehat{\mathds{R}}$.}
\subsection{Generalized multivariate functions}
Let $\widehat{\mathds{R}}^{\infty}$ be the space of \textit{finite sequences over} $\widehat{\mathds{R}}$, that is,
\[
\widehat{\mathds{R}}^{\infty}\coloneqq
\big\{(a_1,a_2,a_3,\ldots):\hbox{$a_i\in\widehat{\mathds{R}}$ for all $i$ and $a_i\neq0$ for finitely many $i$'s}\big\}.
\]
A \textit{generalized multivariate function} is a function whose domain is a subset of $\widehat{\mathds{R}}^{\infty}$. For example, the function $f(x_1,x_2,\ldots)=x_3/x_1$ is a generalized multivariate function whose domain is
\[
\mathrm{dom}(f)=\big\{(a_1,a_2,\ldots)\in\widehat{\mathds{R}}^{\infty}:\hbox{$(a_1,a_3)\neq(0,0)$ and $(a_1,a_3)\neq(\infty,\infty)$}\big\}.
\]
To express generalized multivariate functions, we shall use the usual functional notation,
$f(x_1,x_2,x_3,\ldots)$ or $f(X)$, in short. However, to simplify the notation, especially when it is not necessary to emphasize the assigned values of the function, we shall just use its name - for example, $f=x_3/x_1$ instead of $f(x_1,x_2,\ldots)=x_3/x_1$. If $f$ and $g$ are generalized multivariate functions and $\ast\in\{+,-,\times,\div\}$, then $f\ast g$ is the function defined by $(f\ast g)(X)\coloneqq f(X)\ast g(X)$. Two important families of generalized multivariate functions are the \textit{constant} functions, $f(x_1,x_2,\ldots)=C$, where $C\in\widehat{\mathds{R}}$, and the \textit{atomic} functions, $f(x_1,x_2,\ldots)=x_i$, where $i$ is a positive integer.

We shall say that a generalized multivariate function $f$ \textit{depends upon} the variable $x_i$ if there exist two tuples $X_a=(c_1,\ldots,c_{i-1},a,c_{i+1},\ldots)$ and $X_b=(c_1,\ldots,c_{i-1},b,c_{i+1},\ldots)$ in $\mathrm{dom}(f)$ that differ only in their $i$-th position, such that $f(X_a)\neq f(X_b)$.
We denote the set of variables upon which $f$ depends by $\mathcal{V}(f)$. Intuitively, $f$ depends upon $x_i$ if $x_i$ ``appears" in $f$ and cannot be canceled. For example, if $f=x_4+x_1x_5-x_7$, then $\mathcal{V}(f)=\{x_1,x_4,x_5,x_7\}$, and if $f=(x_1^2x_2)/(x_2x_3)$, then $\mathcal{V}(f)=\{x_1,x_3\}$. Note that if $f$ is a constant function, then $\mathcal{V}(f)=\varnothing$.

We shall say that two generalized multivariate functions $f$ and $g$ are \textit{identical}, written $f\equiv g$, if $\mathcal{V}(f)=\mathcal{V}(g)$ and $f(X)=g(X)$ for every $X\in\mathrm{dom}(f)\cap\mathrm{dom}(g)$. For example, $f=x_1/(x_2/x_3)$ and $g=(x_1x_3)/x_2$ are identical. We stress that, in order for two multivariate functions to be identical, it is not required that they have the same domain (as is the case for \textit{equal} functions), but rather that their values be equal on the domain shared by both. For example, $f=(x_1x_2)/x_1$ and $g=x_2$ are identical, although $\mathrm{dom}(f)=\{(a_1,a_2,\ldots)\in\widehat{\mathds{R}}^{\infty}:a_1\neq0\ \hbox{and}\ a_1\neq\infty\}$ and $\mathrm{dom}(g)=\widehat{\mathds{R}}^{\infty}$. In particular, note that by saying $f\equiv0$, we mean that $f(X)=0$ for every $X\in\mathrm{dom}(f)$.

As we shall see, verifying the dependency of a generalized multivariate function $f$ on a specific variable will be very important in our analysis. For generalized multivariate \textit{rational} functions, this can be done via the partial derivatives of $f$. Given a tuple $(c_1,c_2,\ldots)\in\mathrm{dom}(f)$, we define the \textit{partial derivative of $f$ with respect to $x_i$ at} $(c_1,c_2,\ldots)$ to be
\[
\frac{\partial f}{\partial x_i}(c_1,c_2,\ldots)=\lim_{t\to0}\frac{f(c_1,\ldots,c_{i-1},c_i+t,c_{i+1},\ldots)-f(c_1,\ldots,c_{i-1},c_i,c_{i+1},\ldots)}{t}.
\]
If $x_i\notin\mathcal{V}(f)$, then $f(c_1,\ldots,c_{i-1},c_i+t,c_{i+1},\ldots)=f(c_1,\ldots,c_{i-1},c_i,c_{i+1},\ldots)$ for every $t$ such that $(c_1,\ldots,c_{i-1},c_i+t,c_{i+1},\ldots)\in\mathrm{dom}(f)$. Thus, $\frac{\partial f}{\partial x_i}(c_1,c_2,\ldots)=0$. Therefore, if $x_i\notin\mathcal{V}(f)$, then $\partial f/\partial x_i\equiv0$. In general, the converse is not always true, see~\cite[p. 79]{Zorich}, but it holds if $f$ is rational. In order to verify this, suppose that $x_i\in\mathcal{V}(f)$. Hence, there exist two tuples $X_a=(c_1,\ldots,c_{i-1},a,c_{i+1},\ldots)$ and $X_b=(c_1,\ldots,c_{i-1},b,c_{i+1},\ldots)$ in $\mathrm{dom}(f)$ that differ only in their $i$-th position, such that $f(X_a)\neq f(X_b)$. Consider the single-variable function
\[
\psi(t)=f(c_1,\ldots,c_{i-1},t,c_{i+1},\ldots).
\]
Since $f$ is a rational function, it follows that $\psi(t)$ is rational. Hence, it can be prolonged to a continuous function from $\widehat{\mathds{R}}$ to $\widehat{\mathds{R}}$ that has a continuous derivative on $\widehat{\mathds{R}}$. Hence, we may apply the Mean Value Theorem to $\psi(t)$. Thus, there exists $t_0\in\widehat{\mathds{R}}$ such that
\[
\frac{\psi(a)-\psi(b)}{a-b}
=\psi'(t_0)=\frac{\partial f}{x_i}(c_1,\ldots,c_{i-1},t_0,c_{i+1},\ldots).
\]
Since $\psi(a)-\psi(b)=f(X_a)-f(X_b)\neq0$, it follows that $\partial f/\partial x_i\not\equiv 0$, as required. To conclude: If $f$ is a generalized \textit{rational} multivariate function, then $x_i\in\mathcal{V}(f)$ if and only if $\partial f/\partial x_i\not\equiv 0$. Note that $\mathcal{V}(\partial f/\partial x_i)\subseteq\mathcal{V}(f)$. Indeed, if $x_j\in\mathcal{V}(\partial f/\partial x_i)$, then $\mathcal{V}(\partial f/\partial x_i)\neq\varnothing$. Thus, $\partial f/\partial x_i\not\equiv0$, so $x_i\in\mathcal{V}(f)$, as claimed.
\subsection{Arithmetic Expressions}
Let $f$ be a non-constant generalized multivariate function. We shall say that $f$ is an \textit{arithmetic expression} (abbreviated as AE) if $f$ is either an atomic function, $f(x_1,x_2,x_3,\ldots)=x_i$, where $i\geqslant 1$, or if $f$ is of the form $f\equiv g\ast h$, where $\ast\in\{+,-,\times,\div\}$ and $g,h$ are AE's such that
$\mathcal{V}(g)\cupdot\mathcal{V}(h)=\mathcal{V}(f)$. (Generally, we shall use the notation $A\cupdot B$ to indicate that the sets $A$ and $B$, which constitute the union, are disjoint.)

The set of all AE's will be denoted by $\mathcal{A}$, and the subsets of all AE's $f$ satisfying $\mathcal{V}(f)=\{x_1,x_2,\ldots,x_n\}$ will be denoted by $\mathcal{A}_n$. How many non-identical AE's are there in $\mathcal{A}_n$? Clearly, $x_1$ is the only non-identical AE in $\mathcal{A}_1$. Furthermore, as one can verify, there are $6$ non-identical AE's in $\mathcal{A}_2$, namely:
\[
x_1+x_2\qquad x_1-x_2\qquad x_2-x_1\qquad x_1x_2\qquad \frac{x_1}{x_2}\qquad \frac{x_2}{x_1}
\]
and $68$ non-identical AE's in $\mathcal{A}_3$, namely:
\[
\begin{array}{ccccc}
  x_1+x_2+x_3  & x_1+x_2-x_3    & x_1+x_3-x_2   & x_2+x_3-x_1    & x_1-x_2-x_3 \\
  x_3-x_1-x_2    & x_2-x_3-x_1   & x_1x_2x_3     & x_1x_2+x_3 & x_1x_3+x_2  \\
  x_2x_3+x_1     & x_1x_2-x_3    & x_1x_3-x_2    & x_2x_3-x_1  & x_3-x_1x_2  \\
  x_2-x_1x_3    & x_1-x_2x_3    & x_1(x_2+x_3)   & x_2(x_1+x_3) & x_3(x_1+x_2) \\
  x_1(x_2-x_3)  & x_1(x_3-x_2)  & x_2(x_1-x_3) & x_2(x_3-x_1) & x_3(x_1-x_2) \\
  x_3(x_2-x_1)  & x_1/x_2+x_3   & x_2/x_1+x_3 & x_1/x_3+x_2 & x_3/x_1+x_2 \\
  x_2/x_3+x_1   & x_3/x_2+x_1   & x_1/x_2-x_3  & x_2/x_1-x_3 & x_1/x_3-x_2 \\
  x_3/x_1-x_2   & x_2/x_3-x_1   & x_3/x_2-x_1  & x_3-x_1/x_2 & x_3-x_2/x_1 \\
  x_2-x_1/x_3   & x_2-x_3/x_1   & x_1-x_2/x_3  & x_1-x_3/x_2 & x_1/(x_2+x_3)\\
  x_2/(x_1+x_3) & x_3/(x_1+x_2) & x_1/(x_2-x_3) & x_1/(x_3-x_2) & x_2/(x_1-x_3) \\
  x_2/(x_3-x_1) & x_3/(x_1-x_2) & x_3/(x_2-x_1) & (x_2+x_3)/x_1 & (x_1+x_3)/x_2\\
  (x_1+x_2)/x_3 & (x_2-x_3)/x_1 & (x_3-x_2)/x_1 & (x_1-x_3)/x_2 & (x_3-x_1)/x_2\\
  (x_1-x_2)/x_3 & (x_2-x_1)/x_3 & (x_1x_2)/x_3 & (x_1x_3)/x_2 & (x_2x_3)/x_1 \\
  x_1/(x_2x_3) & x_2/(x_1x_3)  & x_3/(x_1x_2)
\end{array}
\]
It can be further shown that $|\mathcal{A}_4|=1170$, $|\mathcal{A}_5|=27142$, and $|\mathcal{A}_6|=793002$.
For additional values, see~\cite{Sloane_A140606}.
\subsection{Isomorphic Arithmetic Expressions}
Let $X=\{x_1,x_2,x_3,\ldots\}$. For every $f\in \mathcal{A}$ and for every permutation $\sigma\in\mathrm{Sym}(X)$, we define $\sigma\cdot f$ to be the generalized multivariate function obtained by permuting the variables of $f$ according to the permutation $\sigma$, that is,
\[
(\sigma\cdot f)(X)\coloneqq f(\sigma\cdot X),
\]
where $\sigma\cdot X$ denotes the tuple $\sigma\cdot (x_1,x_2,\ldots)\coloneqq(\sigma(x_1),\sigma(x_2),\ldots)$. For example, by applying the permutation $\sigma=(x_1\ x_2\ x_3\,\ldots)$, defined by $x_i\mapsto x_{i+1}$, to $f=x_3+x_2x_7$, we get $\sigma\cdot f=x_4+x_3x_8$. Clearly, $\sigma\cdot f\in \mathcal{A}$ for every $\sigma\in\mathrm{Sym}(X)$. In addition, it is straightforward to verify that $\varepsilon\cdot f\equiv f$, where $\varepsilon$ denotes the trivial permutation of $X$, and that $(\sigma\tau)\cdot f\equiv\sigma\cdot(\tau\cdot f)$ for every $\sigma,\tau\in\mathrm{Sym}(X)$. In other words, the symmetric group $\mathrm{Sym}(X)$ acts on $\mathcal{A}$ via permutations.

Let $f,g\in\mathcal{A}$. We shall say that $f$ is \textit{isomorphic} to $g$, written $f\cong g$, if there exists $\sigma\in\mathrm{Sym}(X)$ such that $\sigma\cdot f\equiv g$. For example, $f=(x_1+x_2)(x_3-x_4)$ and $g=(x_4-x_1)(x_5+x_3)$ are isomorphic. Indeed, if $\sigma=(x_1\ x_3\ x_4)(x_2\ x_5)$, then $\sigma\cdot f\equiv g$.

Recall that, in general, if $\sim$ is an equivalence relation over a set $S$ and if $A\subseteq S$, then the \textit{quotient set of $A$ by $\sim$}, written $\overline{A}$, is the set of all equivalence classes $[a]$ of elements $a$ in $A$, that is, $\overline{A}=\{[a]:a\in A\}$. It is straightforward to verify that $\cong$ is an equivalence relation over $\mathcal{A}$, and hence over $\mathcal{A}_n$. The goal of this paper is to count the number of equivalence classes of $\mathcal{A}_n$ under $\cong$. In other words, we shall be interested in finding $|\overline{\mathcal{A}_n}|$. As can be shown,
\[
\overline{\mathcal{A}_1}=\{\,[x_1]\,\}
\]
\[
\overline{\mathcal{A}_2}=\Big\{\,[x_1+x_2]\,,\,[x_1-x_2]\,,\,[x_1x_2]\,,\,[\textstyle\frac{x_1}{x_2}]\,\Big\}
\]
and
\[
\overline{\mathcal{A}_3}=\left\{
\begin{array}{c}
 [x_1+x_2+x_3]\,,\,[x_1+x_2-x_3]\,,\,[x_1-x_2-x_3]\,,\,[x_1x_2+x_3]\,,\, \\[0.15cm]
 [x_1x_2-x_3]\,,\,[x_1-x_2x_3]\,,\,[x_1(x_2+x_3)]\,,\,[x_1(x_2-x_3)]\,,\,\\[0.15cm]
 [\frac{x_1+x_2}{x_3}]\,,\,[\frac{x_1-x_2}{x_3}]\,,\,[\frac{x_1}{x_2-x_3}]\,,\,[\frac{x_1}{x_2+x_3}]\,,\,[\frac{x_1}{x_2}-x_3]\,,\, \\[0.15cm]
 [\frac{x_1}{x_2}+x_3]\,,\,[x_1-\frac{x_2}{x_3}]\,,\,[x_1x_2x_3]\,,\,[\frac{x_1}{x_2x_3}]\,,\,[\frac{x_1x_2}{x_3}]
\end{array}\right\}.
\]
Hence, $|\overline{\mathcal{A}_1}|=1$, $|\overline{\mathcal{A}_2}|=4$, and $|\overline{\mathcal{A}_3}|=18$.

Note that if $f\in\mathcal{A}$, then there exists $\sigma\in\mathrm{Sym}(X)$ such that $\sigma\cdot f\in\mathcal{A}_n$. It follows that
\[
\overline{\mathcal{A}}=\overline{\mathcal{A}_1}\cupdot\overline{\mathcal{A}_2}\cupdot\overline{\mathcal{A}_3}\cupdot\cdots
\]
This fact allows us to define a $\ast$-like operation on the equivalence classes of $\mathcal{A}$ as follows: Let $\ast\in\{+,-,\times,\div\}$ and let $[f],[g]\in\overline{\mathcal{A}}$. If $\sigma,\tau$ are permutations of $X$ such that $\sigma\cdot f\in\mathcal{A}_n$ and $\tau\cdot g\in\mathcal{A}_m$, then we define
\[
[f]\ast[g]\coloneqq\big[(\sigma\cdot f)\ast(\lambda\tau\cdot g)\big],
\]
where $\lambda$ is the permutation $\lambda=(x_1\ x_{n+1})(x_2\ x_{n+2})\cdots(x_m\ x_{n+m})$. For example,
\[
\textstyle\left[x_7x_5+x_2\right]\left[\frac{x_1x_8}{x_6}-x_2\right]=\left[(x_1x_2+x_3)(\frac{x_4x_5}{x_7}-x_6)\right]
\]
and
\[
\textstyle\left[x_3x_5\right]-\left[x_3x_5\right]=\left[x_1x_2-x_3x_4\right].
\]
As we shall prove later, this operation is well-defined. It is worthwhile noting that this $\ast$-like operation is compatible with the usual $\ast$ operation between AE's, in the sense that if $f,g\in\mathcal{A}$ satisfy $\mathcal{V}(f)\cap\mathcal{V}(g)=\varnothing$, then $[f\ast g]=[f]\ast[g]$. The proof will be given in Proposition~1 of Appendix~B.
\subsection{The three types of arithmetic expressions}
The \textit{negation} of a generalized multivariate function $f$, written $-f$, is the function defined by
$(-f)(X)\coloneqq-f(X)$. Note that if $f\in\mathcal{A}$, then it may happen that $-f\notin\mathcal{A}$. For example, if $f=x_1x_2$, then $f\in\mathcal{A}$ but $-f\notin\mathcal{A}$. If $-f\in\mathcal{A}$ and satisfies $f\cong -f$, then $f$ is called \textit{self-negative}. For example, $x_1-x_2$ is self-negative since $x_1-x_2\cong x_2-x_1$, but $x_1-x_2x_3$ is not. Another example of a self-negative expression is $x_1+x_2(x_3-x_4)-x_5$. The collection of AE's can be divided into three types as follows: we say that $f\in\mathcal{A}$ is of the \textit{first type} if $-f\notin \mathcal{A}$. If $-f\in\mathcal{A}$ but $f\ncong-f$, then we say that $f$ is of the \textit{second type}. Finally, if $-f\in\mathcal{A}$ and $f\cong-f$, then we say that $f$ is of the \textit{third type}. For example, $x_1+x_2x_3$ is of the first type, $x_1-x_2x_3$ is of the second type, and $x_1(x_2-x_3)$ is of the third type. As we shall see later, the type of an AE is invariant under permutation.
\subsection{Classifying expressions in $\mathcal{A}$}
Beyond the division into three types, $\mathcal{A}$ can be further divided into four main categories. Since these categories are recursive in nature, we shall first refer to AE's $f$ with $|\mathcal{V}(f)|\in\{1,2\}$.

Let $i$ and $j$ be different positive integers. If $f\equiv x_i$ is an atomic expression, then we shall say that $f$ \textit{ends with $\times$}. If $f\equiv x_i+x_j$, then we shall say that $f$ \textit{ends with $+$}. If $f\equiv x_i-x_j$, then we shall say that $f$ \textit{ends with $-$}. If $f\equiv x_i/x_j$, then we shall say that $f$ \textit{ends with $\div$}. Finally, if $f\equiv x_ix_j$, then we shall say that $f$ \textit{ends with $\times$}.

Next, let $f\in\mathcal{A}$ with $|\mathcal{V}(f)|\geqslant3$. We shall say that $f$ \textit{ends with $+$} if there exist $g,h\in\mathcal{A}$ such that
\[
f\equiv g+h\quad\text{and}\quad\mathcal{V}(g)\cupdot\mathcal{V}(h)=\mathcal{V}(f),
\]
where $g$ and $h$ end with either $+$, $\times$, or $\div$. We shall say that $f$ \textit{ends with $\times$} if there exist $g,h\in\mathcal{A}$ such that
\[
f\equiv gh\quad\text{and}\quad\mathcal{V}(g)\cupdot\mathcal{V}(h)=\mathcal{V}(f),
\]
where $g$ and $h$ end with either $-$, $+$, or $\times$. We shall say that $f$ \textit{ends with $\div$} if there exist $g,h\in\mathcal{A}$ such that
\[
f\equiv \frac{g}{h}\quad\text{and}\quad\mathcal{V}(g)\cupdot\mathcal{V}(h)=\mathcal{V}(f),
\]
where $g$ and $h$ end with either $+$, $-$, or $\times$. Finally, we shall say that $f$ \textit{ends with $-$} if there exist $g,h\in\mathcal{A}$ such that
\[
f\equiv g-h\quad\text{and}\quad\mathcal{V}(g)\cupdot\mathcal{V}(h)=\mathcal{V}(f),
\]
where $g$ and $h$ end with either $+$, $\times$, or $\div$, and $h$ is of the first type. For example, by the above definitions, the expression $f\equiv (x_1-x_2)/x_3-x_4-x_5$ ends with $-$, while $f\equiv x_4+x_5-(x_1-x_2)/x_3$ ends with $+$. As another example, by the above definitions, the expression $f\equiv x_1(x_2/x_3-x_4)$ ends with $\times$, while $f\equiv (x_2/x_3)(x_1-x_4)$ ends with $\div$.

Given an operator $\ast\in\{+,-,\times,\div\}$, we shall denote by $\mathcal{A}(\ast)$ the set of AE's in $\mathcal{A}$ that end with $\ast$. We define $\mathcal{A}_n(\ast)$ similarly. Furthermore, if $\ast,\star,\ldots,\bullet\in\{+,-,\times,\div\}$, then we shall denote the union $\mathcal{A}(\ast)\cup\mathcal{A}(\star)\cup\cdots\cup\mathcal{A}(\bullet)$ concisely by $\mathcal{A}(\ast,\star,\ldots,\bullet)$. As we shall prove later, $\mathcal{A}(\ast)$ is invariant under permutation; that is, if $f\in\mathcal{A}(\ast)$, then $\sigma\cdot f\in\mathcal{A}(\ast)$ for every $\sigma\in\mathrm{Sym}(X)$.
\subsection{The operator of zero assigning}
Given $f\in\mathcal{A}$ and $x_i\in\mathcal{V}(f)$, we define the \textit{zero-assigning operator} $f^i$ to be the expression obtained by substituting $x_i=0$ into $f$. As we shall prove later, $f^i$ is well-defined for every $x_i\in\mathcal{V}(f)$. For example, if
\[
f=\frac{x_1+x_2}{x_5-\frac{x_3}{x_4}}
\]
then by setting $x_3=0$, we get the expression $f^3=(x_1+x_2)/x_5$. Note that in this example, $f^3\in\mathcal{A}$ and $\mathcal{V}(f^3)\subset\mathcal{V}(f)$. Generally, we have that $\varnothing\subseteq\mathcal{V}(f^i)\subset\mathcal{V}(f)$ for every $x_i\in\mathcal{V}(f)$, but $f^i$ is not necessarily an AE. For example, if $f=x_1-x_2$, then $f^1=-x_2$, so $f^1\notin\mathcal{A}$. Moreover, $f^i$ can be reduced to be either \textit{identically zero} or \textit{identically infinity}. For example, if  $f=x_1(x_2-x_3/x_4)$, then $f^1\equiv0$ and $f^4\equiv\infty$.
\subsection{The multilinear form of arithmetic expression}
The AE's are special rational functions in the sense that they can be expressed uniquely as a quotient of two multilinear polynomials, where the denominator is monic. Recall that a \textit{multilinear polynomial} is a polynomial in which each monomial is a constant times a product of distinct variables. For example, $f=(x_1+x_4)(x_2-x_3x_5)$ is a multilinear polynomial since
\[
f\equiv x_1x_2-x_1x_3x_5+x_2x_4-x_3x_4x_5.
\]
It will be convenient to consider constant polynomials also as multilinear as well. If the monomials of a multilinear polynomial $P$ are arranged in lexicographic order (e.g., $x_1x_3$ before $x_2x_3x_4$ and $x_2$ before $x_2x_3$), then we say that $P$ is \textit{monic} if the coefficient of the leading monomial, according to this order, is positive. For example, $x_2-x_3x_4$ is monic, while $x_2-x_1x_3$ is not. Notice that if $P$ is not monic, then $-P$ is monic and vice versa.

It is straightforward to show that, given an arbitrary variable $x_i$, every multilinear polynomial $P$ can be uniquely expressed as $P\equiv x_iP'+P''$, where $P'$ and $P''$ are multilinear polynomials (possibly constant) such that $x_i\notin\mathcal{V}(P')\cup\mathcal{V}(P'')$. Note that in this presentation, $x_i\in\mathcal{V}(P)$ if and only if $\partial P/\partial x_i\equiv P'\not\equiv0$. For example, with respect to the variable $x_2$, $P=x_2x_3x_5+x_2x_4$ can be expressed as $P\equiv x_2P'+P''$, where $P'=x_3x_5+x_4$ and $P''=0$, while with respect to the variable $x_3$, $P\equiv x_3P'+P''$, where $P'=x_2x_5$ and $P''=x_2x_4$.

Back to AE's, the fact that every $f\in\mathcal{A}$ can be expressed uniquely as a quotient of two multilinear polynomials is proven in Proposition~A.9. We shall refer to this unique presentation as
\textit{the multilinear form of} $f$. For example, the multilinear form of
\[
f=\frac{x_1-x_2}{\frac{x_3}{x_4}+x_5}+\frac{x_6}{x_7}
\]
is
\[
f\equiv\frac{x_1x_4x_7-x_2x_4x_7+x_3x_6+x_4x_5x_6}{x_3x_7+x_4x_5x_7}.
\]
Note that in the multilinear form, the denominator must be monic. For example, the multilinear form of
\[
f\equiv \frac{x_3-x_2}{x_5x_4-x_1}
\]
is
\[
f\equiv\frac{x_2-x_3}{x_1-x_4x_5}.
\]
If $f\in\mathcal{A}$ and $f\equiv F_1/F_2$ is its multilinear form, then we shall say that $f$ is \textit{monic} if $F_1$ is monic as well.
\section{The Fundamental Theorem of Arithmetic expressions}
As mentioned above, the goal of this paper is to count the number of non-isomorphic AE's in $\mathcal{A}_n$. As the definition implies, this should be done recursively, by constructing every AE in $\mathcal{A}$ using other AE's that consist of a fewer number of variables. For example, every AE with $4$ variables can be constructed by either adding, subtracting, multiplying, or dividing other AE's with $1$, $2$, or $3$ variables. Naturally, one must ask: Does this kind of construction involve double counting? In other words, how do we ensure that the AE's we get are non-isomorphic, and how do we ensure that in this process we do not create the same AE more than once? To illustrate this difficulty, let us consider $f=x_1+x_2-x_3-x_4$. In this case, $f$ can be obtained in more than one way from AE's with a fewer number of variables. For instance, it can be obtained either by adding $x_1-x_3$ and $x_2-x_4$ or by subtracting $x_3+x_4$ from $x_1+x_2$. As a start, in order to avoid such duplication so that we can perform a correct enumeration, we must verify that every AE ``falls" into exactly one of the subsets $\mathcal{A}(+)$, $\mathcal{A}(-)$, $\mathcal{A}(\times)$, and $\mathcal{A}(\div)$. In other words, we verify that every AE ends with exactly either $+$, $-$, $\times$, or $\div$. After establishing this fact, we proceed with finding a sort of unique ``canonical" forms that characterizes the AE's in each of the subsets $\mathcal{A}(+)$, $\mathcal{A}(-)$, $\mathcal{A}(\times)$, and $\mathcal{A}(\div)$. These ``canonical" forms are described in our following crucial theorem:
\begin{customthm}{}[The Fundamental Theorem of Arithmetic Expressions]
Every $f\in\mathcal{A}$ ends with exactly one of the operators in $\{+,-,\times,\div\}$; that is,
\[
\mathcal{A}=\mathcal{A}(+)\cupdot\mathcal{A}(-)\cupdot\mathcal{A}(\times)\cupdot\mathcal{A}(\div).
\]
Furthermore,
\begin{enumerate}
  \item[\textup{(a)}]Every $f\in \mathcal{A}(+)$ can be expressed uniquely as a sum $f\equiv f_1+\cdots+f_n$ up to the order of the summands, where $n\geqslant2$, $\mathcal{V}(f_1)\cupdot\cdots\cupdot\mathcal{V}(f_n)=\mathcal{V}(f)$, and $f_j\in\mathcal{A}(\times,\div)$ for each $1\leqslant j\leqslant n$. Moreover,

      \textup{(i)} $f$ is of the first type if and only if each one of the $f_j$'s is of the first type.

      \textup{(ii)} $f$ is of the third type if and only if each $f_j$ is of either the second or third type, and those $f_j$'s that are of the second type can be grouped into pairs $f_{j'}+f_{j''}$ where each such pair satisfies $f_{j'}\cong-f_{j''}$.

  \item[\textup{(b)}]Every $f\in \mathcal{A}(-)$ can be expressed uniquely as a difference $f\equiv g-h$, where $g,h\in\mathcal{A}(+,\times,\div)$, $\mathcal{V}(g)\cupdot\mathcal{V}(h)=\mathcal{V}(f)$, and where $h$ is of the first type.

      Moreover, $f$ is of the third type if and only if either $g\cong h$ or $g\cong G+h$, where $G\in\mathcal{A}(+,\times,\div)$, $G$ is of the third type, and it satisfies $\mathcal{V}(G)\cap\mathcal{V}(h)=\varnothing$.

  \item[\textup{(c)}]Every $f\in\mathcal{A}(\times)$ can be expressed uniquely as a product $f\equiv \varepsilon f_1\cdots f_n$ up to the order of the factors, where $n\geqslant1$, $\varepsilon\in\{1,-1\}$, $\mathcal{V}(f_1)\cupdot\cdots\cupdot\mathcal{V}(f_n)=\mathcal{V}(f)$, and each $f_j$ is monic and is either an atomic expression or in $\mathcal{A}(+,-)$. Moreover,

      \textup{(i)} $f$ is of the first type if and only if $\varepsilon=1$ and each one of the $f_j$'s is of the first type.

      \textup{(ii)} $f$ is of the second type if and only if at least one of the $f_j$'s is of the second type and the rest are of either the first or second type.

      \textup{(iii)} $f$ is of the third type if and only if $\varepsilon=1$ and at least one of the $f_j$'s is of the third type.

  \item[\textup{(d)}]Every $f\in \mathcal{A}(\div)$ can be expressed uniquely as a quotient $f\equiv \varepsilon(g/h)$, where $\varepsilon\in\{1,-1\}$, both $g,h\in\mathcal{A}(+,-,\times)$ are monic, and $\mathcal{V}(g)\cupdot\mathcal{V}(h)=\mathcal{V}(f)$. Moreover,

      \textup{(i)} $f$ is of the first type if and only if $\varepsilon=1$ and both $g$ and $h$ are of the first type.

      \textup{(ii)} $f$ is of the second type if and only if at least one of $g$ or $h$ is of the second type and the other is of either the first or second type.

      \textup{(iii)} $f$ is of the third type if and only if $\varepsilon=1$ and at least one of $g$ or $h$ is of the third type.
\end{enumerate}
\end{customthm}
To illustrate the Fundamental Theorem, consider the following AE:
\[
f=\frac{x_1-x_2}{x_3x_4}+x_5-x_6+\frac{x_7}{x_8}-\frac{x_9}{x_{10}}.
\]
In this case $f\in\mathcal{A}(-)$ and is of the third type. Indeed, first note that $f$ can be rearranged as
\[
f\equiv\underbrace{\overbrace{\frac{x_1-x_2}{x_3x_4}}^{G}+\overbrace{\left(x_5+\frac{x_7}{x_8}\right)}^{h_1}}_g
-\underbrace{\left(x_6+\frac{x_9}{x_{10}}\right)}_{h}.
\]
Since $g,h\in\mathcal{A}(+)$ and $h$ is of the first type, it follows by Part~(b) of the Fundamental Theorem that $f\in\mathcal{A}(-)$. In addition, observe that $G$ is of the third type and $h_1\cong h$. Thus, $g\cong G+h$, so $f$ is of the third type, as claimed. Let us consider another example:
\[
f=\overbrace{\frac{x_1}{x_2-x_3}}^{f_1}+\overbrace{\frac{x_4-x_5x_6}{x_7}}^{f_2}+\overbrace{\frac{x_8x_9-x_{10}}{x_{11}}}^{f_3}
\]
In this case $f\in\mathcal{A}(+)$ and is of the third type. Indeed, first note that $f_1,f_2,f_3\in\mathcal{A}(\div)$, so $f\in\mathcal{A}(+)$ by Part~(a) of the Fundamental Theorem. In addition, since $f_1$ is of the third type, $f_2,f_3$ are of the second type and $f_2\cong-f_3$, it follows by Part~(a.ii) of the Fundamental Theorem that $f$ is of the third type.

Now, given the canonical forms that the Fundamental Theorem describes, and given that two isomorphic AE's differ by some permutation of their variables, the process of constructing all the non-isomorphic AE's is nearly complete. As an example, let us describe the classification of all non-isomorphic AE's with $1$, $2$, and $3$ variables:
\[
\begin{array}{c||c|c|c|c|}
                & \mathcal{A}_1(+) & \mathcal{A}_1(-) & \mathcal{A}_1(\times) & \mathcal{A}_1(\div) \\ \hline\hline
  \hbox{Type 1} &                  &                  &  x_1                  &\\ \hline
  \hbox{Type 2} &                  &                  &                       &\\ \hline
  \hbox{Type 3} &                  &                  &                       &\\ \hline
\end{array}
\]

\[
\begin{array}{c||c|c|c|c|}
                & \mathcal{A}_2(+) & \mathcal{A}_2(-) & \mathcal{A}_2(\times) & \mathcal{A}_2(\div) \\ \hline\hline
  \hbox{Type 1} &   x_1+x_2        &                  &  x_1x_2               & x_1/x_2 \\ \hline
  \hbox{Type 2} &                  &                  &                       &\\ \hline
  \hbox{Type 3} &                  & x_1-x_2          &                       &\\ \hline
\end{array}
\]

\[
\begin{array}{c||c|c|c|c|}
              & \mathcal{A}_3(+) & \mathcal{A}_3(-) & \mathcal{A}_3(\times) & \mathcal{A}_3(\div) \\ \hline\hline
\hbox{Type 1} & x_1+x_2+x_3      &                  &  x_1x_2x_3   &  x_1/(x_2x_3) \\
              & x_1+x_2x_3       &                  & x_1(x_2+x_3) &  (x_2x_3)/x_1 \\
              & x_1+x_2/x_3      &                  &              &  x_1/(x_2+x_3)\\
              &                  &                  &              &  (x_2+x_3)/x_1 \\ \hline
\hbox{Type 2} &                  & x_1-(x_2+x_3)    &              & \\
              &                  & (x_2+x_3)-x_1    &              & \\
              &                  & x_1-x_2x_3       &              & \\
              &                  & x_2x_3-x_1       &              & \\
              &                  & x_1-x_2/x_3      &              & \\
              &                  & x_2/x_3-x_1      &              & \\ \hline
\hbox{Type 3} &                  &                  &  x_1(x_2-x_3)& (x_2-x_3)/x_1\\
              &                  &                  &              & x_1/(x_2-x_3)\\ \hline
\end{array}
\]

The proof of the Fundamental Theorem requires quite a bit of effort. For this reason, in order to aid in readability and focus on our main results regarding the combinatorial aspect of the paper, we have postponed the proof of the Fundamental Theorem to Appendix~A.
\section{The enumeration of arithmetic expressions}
In this section, we shall prove the main result of this paper, Theorem~4.3, which presents the formulas for calculating the number of non-isomorphic arithmetic expressions with $n$ variables. In order to prove Theorem~4.3, we shall assume the correctness of the Fundamental Theorem.
\subsection{Preliminaries: partitions and multisets}
Recall that a \textit{partition} of a positive integer $n$ is a representation of $n$ as a sum of positive integers. For example, the seven partitions of $5$ are
\[
\begin{array}{l}
1+1+1+1+1\\
1+1+1+2\\
1+2+2\\
1+1+3\\
1+4\\
2+3\\
5\\
\end{array}
\]
The partition $n$, with only one component, is called the \textit{trivial partition}. One way to describe a partition of $n$ is as a tuple of positive integers $(k_1,k_2,\ldots,k_r)$ such that $k_1\leqslant k_2\leqslant\cdots\leqslant k_r$ and $k_1+k_2+\cdots+k_r=n$. In certain cases, where we want to emphasize the number of repetitions of summands in the partition, we will use the following compact form: $(k_1^{m_1},k_2^{m_2},\ldots,k_s^{m_s})$. Here, $m_1$ is the number of $k_1$'s, $m_2$ is the number of $k_2$'s, etc. For example, in this notation, the partitions of $5$ can be written as follows:
\[
(1^5)\ ,\ (1^3,2^1)\ ,\ (1^1,2^2)\ ,\ (1^2,3^1)\ ,\ (1^1,4^1)\ ,\ (2^1,3^1)\ ,\ (5^1).
\]
To indicate that $\lambda$ is a partition of $n$, we shall use the notation $\lambda\vdash n$. For example, $(1^1,2^2)\vdash 5$.

Recall that a \textit{multiset} is a collection of objects in which elements may
occur more than once, including an infinite number of times. The number of times that an object occurs in a multiset is
called its \textit{repetition number}. It is customary to use the notation $r\cdot x$ to mean that the repetition number of $x$ is $r$. For example, the multiset $\{a,a,a,b,b,c,c,c,\ldots\}$ can be denoted by $\{3\cdot a,2\cdot b,\infty\cdot c\}$. If all repetition numbers are $\infty$, then it is well known that the number of multisubsets of $\{\infty\cdot1,\infty\cdot2,\ldots,\infty\cdot n\}$ with $k$ elements is $\multiset{n}{k}\coloneqq\binom{n+k-1}{k}$ (see~\cite[p. 38]{Chen}). For example, the number of multisubsets of $\{\infty\cdot1,\infty\cdot2,\infty\cdot3\}$ with $3$ elements is $\multiset{3}{3}=\binom{5}{3}=10$. Indeed, these multisubsets are
\[
\begin{gathered}
\{1,1,1\}\quad\{1,1,2\}\quad\{1,1,3\}\quad\{1,2,2\}\quad\{1,2,3\}\\
\{1,3,3\}\quad\{2,2,2\}\quad\{2,2,3\}\quad\{2,3,3\}\quad\{3,3,3\}\\
\end{gathered}
\]
In other words, $\multiset{n}{k}$ is the number of ways to choose $k$ elements from $n$ distinct elements, where we may choose the same element more than once. The following lemma will play an important role in our study.
\begin{lemma}[The colored weights problem]
Let $A$ be a set of weights such that for every positive integer $k$, we have $N(k)$ differently-colored weights, each
weighing $k$ kilograms. Then, the number of ways to use these weights to create a total weight of $n$ kilograms, where it is allowed to use the same weight more than once, is
\[
\omega_n(A)\coloneqq\sum_{(k_1^{m_1},\ldots,k_s^{m_s})\vdash n}\dispmultiset{N(k_1)}{m_1}\dispmultiset{N(k_2)}{m_2}
\cdots\dispmultiset{N(k_s)}{m_s}.
\]
\end{lemma}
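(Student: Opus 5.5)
A selection of weights with total $n$ kilograms is exactly a finite multiset of colored weights whose weights sum to $n$. I would encode such a multiset by recording, for each weight value $k$, how many weights of mass $k$ it contains (with repetition), say $m(k)\geqslant0$. The mass condition then becomes $\sum_k k\,m(k)=n$. The plan is to split the count by these multiplicities and reduce each piece to the multiset count $\multiset{N}{m}=\binom{N+m-1}{m}$ recalled just before the lemma.

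The first step is to classify selections according to the partition they induce. Listing the masses of the chosen weights in nondecreasing order, forgetting colors, gives a partition of $n$. In compact form this partition is $(k_1^{m_1},\ldots,k_s^{m_s})\vdash n$, where $k_1<\cdots<k_s$ are the distinct masses used and $m_i=m(k_i)$. Each selection determines exactly one such partition. So the set of selections is the disjoint union, over $\lambda\vdash n$, of the sets $S_\lambda$ of selections inducing $\lambda$, and hence $\omega_n(A)=\sum_{\lambda\vdash n}|S_\lambda|$.

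The second step is to count $S_\lambda$ for fixed $\lambda=(k_1^{m_1},\ldots,k_s^{m_s})$. A selection in $S_\lambda$ is determined by independent data, one item for each $i$: a multisubset of size $m_i$ drawn from the $N(k_i)$ colors available at mass $k_i$, with repetition allowed. Conversely, any such tuple of multisets gives a selection inducing $\lambda$. This correspondence is a bijection because weights of different masses are distinct objects, so the choices for different $i$ neither interact nor overlap. By the multiset count with unlimited repetition, the $i$-th choice can be made in $\multiset{N(k_i)}{m_i}$ ways. The product rule then gives $|S_\lambda|=\prod_{i=1}^s\multiset{N(k_i)}{m_i}$, and summing over $\lambda$ yields the stated formula.

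The only point needing care, rather than a real obstacle, is the degenerate case $N(k_i)=0$. Here the convention $\multiset{0}{m}=\binom{m-1}{m}=0$ for $m\geqslant1$ correctly kills the partitions that use unavailable masses, and I would note this explicitly. I would also stress that selections are unordered, which is why multisets appear rather than sequences. This is consistent with how the lemma will later be applied to unordered sums and products of non-isomorphic classes.
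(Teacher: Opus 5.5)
Your proposal is correct and matches the paper's proof: both group the weighings by their underlying uncolored partition $(k_1^{m_1},\ldots,k_s^{m_s})\vdash n$, color the $m_j$ weights of mass $k_j$ independently in $\multiset{N(k_j)}{m_j}$ ways, multiply, and sum over partitions. Your explicit check that $\multiset{0}{m}=0$ for $m\geqslant1$ makes the paper's later use of this convention precise.
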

\begin{proof}
Note that every partition $(k_1^{m_1},\ldots,k_s^{m_s})$ of $n$ corresponds exactly to one uncolored weighing option of $n$ kilograms, in which, for every $j$, we use $m_j$ (uncolored) weights of $k_j$ kilograms each. The total number of colored weighings of $n$ kilograms, is the number of ways to color every such uncolored weighing option. This is achieved by coloring the weights of the same weight separately as follows: to color the $m_j$ weights of $k_j$ kilograms, we choose $m_j$ colors from the available $N(k_j)$ colors for the weight $k_j$. Since we allow the use of  the same color more than once, the number of ways to color the $m_j$ weights of $k_j$ kilograms is $\multiset{N(k_j)}{m_j}$. Hence, the number of ways to color the weights that correspond to the partition $(k_1^{m_1},\ldots,k_s^{m_s})$ is
\[
\dispmultiset{N(k_1)}{m_1}\dispmultiset{N(k_2)}{m_2}
\cdots\dispmultiset{N(k_s)}{m_s}
\]
By running through all partitions of $n$, it follows that the total number of colored weighing of $n$ kilograms is $\omega_n(A)$, as claimed.
\end{proof}

As an illustrative example, suppose that the set of weights is
\begin{center}
\begin{tikzpicture}
        \filldraw[color=black, fill=blue!40!white](-0.3,-0.3) rectangle (0.3,0.3);
        \node at (0,0) {$1$};
        \filldraw[color=black, fill=red!40!white](0.4,-0.3) rectangle (1,0.3);
        \node at (0.7,0) {$1$};
        \filldraw[color=black, fill=blue!40!white] (1.1,-0.3)--(1.7,-0.3)--(1.4,0.3)--(1.1,-0.3);
        \node at (1.4,-0.1) {$2$};
        \filldraw[color=black,fill=red!40!white](1.8,-0.3)--(2.4,-0.3)--(2.1,0.3)--(1.8,-0.3);
        \node at (2.1,-0.1) {$2$};
        \filldraw[color=black, fill=yellow!40!white]
                         (2.5,-0.3)--(3.1,-0.3)--(2.8,0.3)--(2.5,-0.3);
        \node at (2.8,-0.1) {$2$};
        \filldraw[color=black, fill=blue!40!white] (3.5,0) circle(0.3);
        \node at (3.5,0) {$4$};
        \node at (-0.9,0) {$A=\Big\{$};
        \node at (4,0) {$\Big\}$};
\end{tikzpicture}
\end{center}
Thus, in this set, $N(1)=2$, $N(2)=3$, $N(3)=0$, $N(4)=1$, and $N(k)=0$ for all $k\geqslant5$. Let us find the number of ways to create the weight $n=4$. The partitions of $4$ are $(1^4),(1^2,2^1),(1^1,3^1),(2^2),(4^1)$. Thus, according to Lemma~4.1, the number of ways to create the weight $n=4$ is
\[
\begin{split}
  \omega_4(A)
  &=\dispmultiset{N(1)}{4}+\dispmultiset{N(1)}{2}\hspace{-0.15cm}\dispmultiset{N(2)}{1}
  +\dispmultiset{N(1)}{1}\hspace{-0.15cm}\dispmultiset{N(3)}{1}+\dispmultiset{N(2)}{2}+\dispmultiset{N(4)}{1}\\
  &=\binom{5}{4}+\binom{3}{2}\binom{3}{1}+\binom{2}{1}\binom{0}{1}+\binom{4}{2}+\binom{1}{1}=21.
\end{split}
\]
Below, we present some illustrated examples of such $4$-kg weighings.
\begin{center}
\begin{tikzpicture}[scale=0.7]
\draw[thick] (-0.5,0)--(0.5,0)--(0,0.5)--(-0.5,0);
\draw[thick] (-1.7,0.5)--(1.7,0.5);
\filldraw[color=black, fill=blue!40!white] (-1.6,0.5) rectangle (-1,1.1);
\filldraw[color=black, fill=blue!40!white] (-0.9,0.5) rectangle (-0.3,1.1);
\filldraw[color=black, fill=blue!40!white] (-1.6,1.1) rectangle (-1,1.7);
\filldraw[color=black, fill=red!40!white] (-0.9,1.1) rectangle (-0.3,1.7);
\filldraw[color=black, fill=black!30!white] (0.5,0.5)--(1.5,0.5)--(1.5,1.5)--(1.1,1.5)--(1.1,1.7)--
                                                   (1.3,1.7)--(1.3,1.8)--(0.7,1.8)--(0.7,1.7)--(0.9,1.7)--
                                                   (0.9,1.5)--(0.5,1.5)--(0.5,0.5);
\node at (-1.3,0.8) {$1$};
\node at (-0.6,0.8) {$1$};
\node at (-1.3,1.4) {$1$};
\node at (-0.6,1.4) {$1$};
\node at (1,1) {$4\,\mathrm{kg}$};
\end{tikzpicture}
\quad
\begin{tikzpicture}[scale=0.7]
\draw[thick] (-0.5,0)--(0.5,0)--(0,0.5)--(-0.5,0);
\draw[thick] (-1.7,0.5)--(1.7,0.5);
\filldraw[color=black, fill=blue!40!white] (-1.6,0.5) rectangle (-1,1.1);
\filldraw[color=black, fill=red!40!white] (-0.9,0.5) rectangle (-0.3,1.1);
\filldraw[color=black, fill=red!40!white] (-1.3,1.1)--(-0.5,1.1)--(-0.9,1.7)--(-1.3,1.1);
\filldraw[color=black, fill=black!30!white] (0.5,0.5)--(1.5,0.5)--(1.5,1.5)--(1.1,1.5)--(1.1,1.7)--
                                                   (1.3,1.7)--(1.3,1.8)--(0.7,1.8)--(0.7,1.7)--(0.9,1.7)--
                                                   (0.9,1.5)--(0.5,1.5)--(0.5,0.5);
\node at (-1.3,0.8) {$1$};
\node at (-0.6,0.8) {$1$};
\node at (-0.9,1.35) {$2$};
\node at (1,1) {$4\,\mathrm{kg}$};
\end{tikzpicture}
\quad
\begin{tikzpicture}[scale=0.7]
\draw[thick] (-0.5,0)--(0.5,0)--(0,0.5)--(-0.5,0);
\draw[thick] (-1.7,0.5)--(1.7,0.5);
\filldraw[color=black, fill=blue!40!white] (-1.6,0.5) rectangle (-1,1.1);
\filldraw[color=black, fill=red!40!white] (-0.9,0.5) rectangle (-0.3,1.1);
\filldraw[color=black, fill=yellow!40!white] (-1.3,1.1)--(-0.5,1.1)--(-0.9,1.7)--(-1.3,1.1);
\filldraw[color=black, fill=black!30!white] (0.5,0.5)--(1.5,0.5)--(1.5,1.5)--(1.1,1.5)--(1.1,1.7)--
                                                   (1.3,1.7)--(1.3,1.8)--(0.7,1.8)--(0.7,1.7)--(0.9,1.7)--
                                                   (0.9,1.5)--(0.5,1.5)--(0.5,0.5);
\node at (-1.3,0.8) {$1$};
\node at (-0.6,0.8) {$1$};
\node at (-0.9,1.35) {$2$};
\node at (1,1) {$4\,\mathrm{kg}$};
\end{tikzpicture}
\quad
\begin{tikzpicture}[scale=0.7]
\draw[thick] (-0.5,0)--(0.5,0)--(0,0.5)--(-0.5,0);
\draw[thick] (-1.7,0.5)--(1.7,0.5);
\filldraw[color=black, fill=yellow!40!white] (-1.6,0.5)--(-0.8,0.5)--(-1.2,1.1)--(-1.6,0.5);
\filldraw[color=black, fill=red!40!white] (-0.8,0.5)--(0,0.5)--(-0.4,1.1)--(-0.8,0.5);
\filldraw[color=black, fill=black!30!white] (0.5,0.5)--(1.5,0.5)--(1.5,1.5)--(1.1,1.5)--(1.1,1.7)--
                                                   (1.3,1.7)--(1.3,1.8)--(0.7,1.8)--(0.7,1.7)--(0.9,1.7)--
                                                   (0.9,1.5)--(0.5,1.5)--(0.5,0.5);
\node at (-0.4,0.75) {$2$};
\node at (-1.2,0.75) {$2$};
\node at (1,1) {$4\,\mathrm{kg}$};
\end{tikzpicture}
\end{center}
For $n=0$, we define $\omega_0(A)=1$. We further define $\omega_n'(A)$ to be the same sum as $\omega_n(A)$, excluding the summand which corresponds to the trivial partition $(n^1)$. Note that, since this summand is
$\multiset{\hspace{-0.05cm}N(n)\hspace{-0.05cm}}{1}=N(n)$, it follows that $\omega_n'(A)=\omega_n(A)-N(n)$.
\subsection{Our main result}
Assuming the fundamental theorem, we are almost ready to prove Theorem~4.3, which is our main result. Before that, we need some additional notation. Given an operator $\ast\in\{+,-,\times,\div\}$, we define $\mathcal{F}(\ast)$, $\mathcal{B}(\ast)$, and $\mathcal{C}(\ast)$ to be the subsets of $\mathcal{A}(\ast)$ which consist of AE's of the first, second, and third type, respectively. If $\ast,\star,\ldots,\bullet\in\{+,-,\times,\div\}$, we define $\mathcal{F}(\ast,\star,\ldots,\bullet)\coloneqq \mathcal{F}(\ast)\cup\mathcal{F}(\star)\cup\cdots\cup\mathcal{F}(\bullet)$. Given a positive integer $n$, we define $\mathcal{F}_n(\ast)\coloneqq \mathcal{F}(\ast)\cap\mathcal{A}_n$. In addition, we define $\mathcal{F}_0(\ast)\coloneqq\{0\}$. The same definitions hold for $\mathcal{B}(\ast)$ and $\mathcal{C}(\ast)$. We further define $\mathcal{MB}(\ast)$ to be the subset of $\mathcal{B}(\ast)$ which includes the monic AE's of $\mathcal{B}(\ast)$.

Since we intend to enumerate representatives of the different equivalence classes in $\mathcal{A}_n$, we first need to convert the results of the Fundamental Theorem into the language of equivalence classes; this is done in the next theorem. Since its proof follows almost directly from the Fundamental Theorem, and in order to maintain the flow of this section, we have placed its proof in the Appendix~B.
\begin{theorem}
Let $n\geqslant 2$. Then:
\begin{enumerate}
  \item[\textup{(a)}]\textup{(i)} Every $[f]\in\overline{\mathcal{A}_n(+)}$ can be expressed uniquely, up to the order of the summands, as
  \[
      [f]=[f_1]+\cdots+[f_r],
  \]
  where each $f_j\in\mathcal{A}_{k_j}(\times,\div)$ and $(k_1,\ldots,k_r)$ is a non-trivial partition of $n$.

  \textup{(ii)} Every $[f]\in\overline{\mathcal{F}_n(+)}$ can be expressed uniquely, up to the order of the summands, as
  \[
      [f]=[f_1]+\cdots+[f_r],
  \]
  where each $f_j\in\mathcal{F}_{k_j}(\times,\div)$ and $(k_1,\ldots,k_r)$ is a non-trivial partition of $n$.

  \textup{(iii)} Every $[f]\in\overline{\mathcal{C}_n(+)}$ can be expressed uniquely, up to the order of the summands, in one of the following ways: either as
  \[
  [f]=[f_1]+\cdots+[f_r],
  \]
  where each $f_j\in\mathcal{C}_{k_j}(\times,\div)$ and $(k_1,\ldots,k_r)$ is a non-trivial partition of $n$;
  or as
  \[
  [f]=[g_1]+[-g_1]+\cdots+[g_s]+[-g_s],
  \]
  where each $g_j\in\mathcal{MB}_{m_j}(\times,\div)$ and $(m_1,\ldots,m_s)$ is a partition of $n/2$; or as
\[
  [f]=[f_1]+\cdots+[f_r]+[g_1]+[-g_1]+\cdots+[g_s]+[-g_s],
\]
where each $f_j\in\mathcal{C}_{k_j}(\times,\div)$, each $g_j\in\mathcal{MB}_{m_j}(\times,\div)$, $(m_1,\ldots,m_s)$ is a partition of some $1\leqslant k<\frac{n}{2}$, and $(k_1,\ldots,k_r)$ is a partition of $n-2k$.
  \item[\textup{(b)}]\textup{(i)} Every $[f]\in\overline{\mathcal{A}_n(-)}$ can be expressed uniquely as
  \[
  [f]=[g]-[h],
  \]
  where $g\in\mathcal{A}_k(+,\times,\div)$ and $h\in\mathcal{F}_{n-k}(+,\times,\div)$ for some $1\leqslant k\leqslant n-1$.

  \textup{(ii)} Every $[f]\in\overline{\mathcal{C}_n(-)}$ can be expressed uniquely either as
  \[
  [f]=[h]-[h],
  \]
  where $h\in\mathcal{F}_{\frac{n}{2}}(+,\times,\div)$, or as
  \[
  [f]=[g]+[h]-[h],
  \]
  where $g\in\mathcal{C}_{n-2k}(+,\times,\div)$ and $h\in\mathcal{F}_{k}(+,\times,\div)$ for some $1\leqslant k<\frac{n}{2}$.
  \item[\textup{(c)}]\textup{(i)} Every $[f]\in\overline{\mathcal{F}_n(\times)}$ can be expressed uniquely, up to the order of factors, either as
      \[
      [f]=\overbrace{[x_1][x_1]\cdots[x_1]}^{\hbox{\tiny $n$ \textup{times}}},
      \]
      or as
      \[
      [f]=[f_1][f_2]\cdots[f_r],
      \]
      where each $f_j\in\mathcal{F}_{k_j}(+)$ and $(k_1,\ldots,k_r)$ is a non-trivial partition of $n$, or as
  \[
  [f]=[f_1][f_2]\cdots[f_{r-m}]\overbrace{[x_1]\cdots[x_1]}^{\hbox{\tiny $m$ \textup{times}}},
\]
 where each $f_j\in\mathcal{F}_{k_j}(+)$ and $(k_1,\ldots,k_{r-m})$ is a partition of $n-m$ for some $1\leqslant m\leqslant n-1$.

\textup{(ii)} Every $[f]\in\overline{\mathcal{B}_n(\times)}$ can be expressed uniquely, up to the order of factors, either as
\[
[f]=[\varepsilon f_1]\cdots[f_r],
\]
where $\varepsilon\in\{-1,1\}$, each $f_j\in\mathcal{MB}_{k_j}(+,-)$, and $(k_1,\ldots,k_r)$ is a non-trivial partition of $n$, or as
\[
[f]=[\varepsilon f_1]\cdots[f_r][g],
\]
where $\varepsilon\in\{-1,1\}$, each $f_j\in\mathcal{MB}_{k_j}(+,-)$, $(k_1,\ldots,k_r)$ is a partition of $k$ for some $1\leqslant k\leqslant n-1$, and $g\in\mathcal{F}_{n-k}(+,\times)$.

\textup{(iii)} Every $[f]\in\overline{\mathcal{C}_n(\times)}$ can be expressed uniquely, up to the order of factors,  either as
\[
[f]=[f_1][f_2]\cdots[f_r],
\]
where each $f_j\in\mathcal{C}_{k_j}(+,-)$ and $(k_1,\ldots,k_r)$ is a non-trivial partition of $n$, or as
\[
[f]=[f_1][f_2]\cdots[f_r][g],
\]
where each $f_j\in\mathcal{C}_{k_j}(+,-)$, $(k_1,\ldots,k_r)$ is a partition of $k$ for some $1\leqslant k\leqslant n-1$, and $g\in\mathcal{F}_{n-k}(+,\times)$, or as
\[
[f]=[f_1][f_2]\cdots[f_r][h],
\]
where each $f_j\in\mathcal{C}_{k_j}(+,-)$, $(k_1,\ldots,k_r)$ is a partition of $k$ for some $1\leqslant k\leqslant n-1$, and $h\in\mathcal{MB}_{n-k}(+,-,\times)$.
\item[\textup{(d)}] \textup{(i)} Every $[f]\in\overline{\mathcal{F}_n(\div)}$ can be expressed uniquely as
\[
[f]=\frac{[g]}{[h]},
\]
where $g\in\mathcal{F}_k(+,\times)$ and $h\in\mathcal{F}_{n-k}(+,\times)$ for some $1\leqslant k\leqslant n-1$.

\textup{(ii)} Every $[f]\in\overline{\mathcal{B}_n(\div)}$ can be expressed uniquely either as
\[
[f]=\frac{[\varepsilon g]}{[h]},
\]
where $\varepsilon\in\{1,-1\}$, $g\in\mathcal{MB}_{n-k}(+,-,\times)$, and $h\in\mathcal{F}_k(+,\times)\cup\mathcal{MB}_k(+,-,\times)$ for some $1\leqslant k\leqslant n-1$, or as
\[
[f]=\frac{[g]}{[\varepsilon h]},
\]
where $\varepsilon\in\{1,-1\}$, $g\in\mathcal{F}_k(+,\times)$, and $h\in\mathcal{MB}_{n-k}(+,-,\times)$ for some $1\leqslant k\leqslant n-1$.

\textup{(iii)} Every $[f]\in\overline{\mathcal{C}_n(\div)}$ can be expressed uniquely as
\[
[f]=\frac{[g]}{[h]},
\]
where either $g\in\mathcal{C}_{n-k}(+,-,\times)$ and $h\in\mathcal{F}_k(+,\times)\cup\mathcal{MB}_k(+,-,\times)\cup\mathcal{C}_k(+,-,\times)$, or
$g\in\mathcal{F}_k(+,\times)\cup\mathcal{MB}_k(+,-,\times)$ and $h\in\mathcal{C}_{n-k}(+,-,\times)$ for some $1\leqslant k\leqslant n-1$.
\end{enumerate}
\end{theorem}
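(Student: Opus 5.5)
The plan is to derive each part of Theorem~4.2 from the corresponding part of the Fundamental Theorem. Two general facts do the translation from functions to equivalence classes. First, the $\ast$-operation on classes is well defined and compatible, in the sense that $[g\ast h]=[g]\ast[h]$ whenever $\mathcal{V}(g)\cap\mathcal{V}(h)=\varnothing$ (Proposition~1 of Appendix~B). Second, the type of an AE, the category $\mathcal{A}(\ast)$, and monicity are invariant under permutation, so classes such as $\overline{\mathcal{F}_n(\ast)}$, $\overline{\mathcal{B}_n(\ast)}$, $\overline{\mathcal{C}_n(\ast)}$ make sense.

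\textbf{Existence.} For $[f]\in\overline{\mathcal{A}_n(\ast)}$, take the canonical decomposition of $f$ given by the Fundamental Theorem and apply compatibility. The sizes $|\mathcal{V}(f_j)|$ then form a partition of $n$. This partition is non-trivial in (a) because there $r\geqslant2$, and in (c) because a single monic factor with $\varepsilon=1$ is either atomic or ends with $+$ or $-$, not with $\times$.

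The type conditions in each part translate into the listed cases.
\begin{itemize}
\item[(c)] For the second type, the sign $\varepsilon$ is absorbed into one factor. The first-type factors are grouped into a single $g\in\mathcal{F}(+,\times)$, which is their product. Factors of the second type are, after normalisation, monic elements of $\mathcal{MB}(+,-)$. Atomic factors appear as the classes $[x_1]$.
\item[(a)(iii)] Each pair $f_{j'}+f_{j''}$ with $f_{j'}\cong-f_{j''}$ is written as $[g]+[-g]$, where $g$ is the monic member of the pair.
\item[(b)(ii)] Part~(b) of the Fundamental Theorem gives directly either $g\cong h$ or $g\cong G+h$.
\item[(d)] The case split is over the types of the numerator and the denominator. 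For example, a type-3 quotient has at least one type-3 component, and the other component is of type 1, a monic type 2, or type 3.
\end{itemize}

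\textbf{Uniqueness.} Suppose two decompositions give the same class, e.g.\ $[f_1]+\cdots+[f_r]=[f'_1]+\cdots+[f'_{r'}]$. Then there is a permutation $\sigma$ with $\sigma\cdot(f_1+\cdots+f_r)\equiv f'_1+\cdots+f'_{r'}$, after the summands are realised on disjoint variable sets. The left-hand side is again a canonical sum with summands $\sigma\cdot f_j\in\mathcal{A}(\times,\div)$. By uniqueness in the Fundamental Theorem, the multisets $\{\sigma\cdot f_j\}$ and $\{f'_j\}$ agree up to $\equiv$, and hence the multisets of classes $\{[f_j]\}$ and $\{[f'_j]\}$ agree. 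The same argument handles (b), (c) and (d), keeping track of $\varepsilon$ and of monicity, since permutations preserve monicity.

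\textbf{Main obstacle.} The hardest step is the sign and monic bookkeeping in (c)(ii), (d)(ii) and (a)(iii). Here one must show that choosing the monic representative $g\in\mathcal{MB}$ and putting the sign $\varepsilon$ on a single factor gives each class exactly once. The danger is that $[\varepsilon f_1]\cdots$ with different placements of $\varepsilon$, or $[g]+[-g]$ versus $[-g]+[g]$, might be counted more than once. To rule this out I would argue as follows.
\begin{itemize}
\item For a second-type $g$, the classes $[g]$ and $[-g]$ are distinct, and exactly one of $g$, $-g$ is monic. This holds because $-g\in\mathcal{A}$ and negating the multilinear numerator flips monicity.
\item Because the product of type-3 and type-1 factors absorbs the sign, a global $-1$ can always be moved onto a type-3 factor. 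This explains why $\varepsilon$ disappears from (c)(iii) and (d)(iii).
\item In (c)(ii) the sign can likewise be shifted between second-type factors by isomorphism, but only within one factor up to the order of factors. This is why a single $\varepsilon$ is placed on one factor, and it must be checked that the resulting forms are pairwise non-isomorphic, again via uniqueness in the Fundamental Theorem.
\end{itemize}
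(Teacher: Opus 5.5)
Your overall route is the one the paper indicates: push the Fundamental Theorem through the compatibility $[g\ast h]=[g]\ast[h]$ (Proposition~1 of Appendix~B) and through the permutation invariance of types and of the sets $\mathcal{A}(\ast)$. It does give (a)(i), (a)(ii), (b), (c)(i) and (d)(i). In (b)(ii) you should add one step: $[G]$ is recovered from $[G+h]$ and $[h]$ by deleting the summand classes of $h$ from the Part~(a) decomposition of $G+h$.

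The gap is in the sign bookkeeping, which you rest on a false claim: permutations do \emph{not} preserve monicity. The paper's own example shows this. $x_2-x_3x_4$ is monic and $x_2-x_1x_3$ is not, yet the transposition $(x_1\ x_4)$ maps one to the other. Your observation that exactly one of $g,-g$ is monic is true for functions, but it does not transfer to classes. For instance, $x_1-x_2x_3$ and $x_1x_2-x_3$ are both monic elements of $\mathcal{B}_3(-)$, while $[x_1x_2-x_3]=[-(x_1-x_2x_3)]$. Two consequences follow:
\begin{itemize}
\item In (a)(iii), ``the monic member of the pair'' is not well defined. The summands $(x_1-x_2x_3)/x_4$ and $(x_5x_6-x_7)/x_8$ of a type-3 sum are both monic, and each is isomorphic to the negative of the other.
\item In (c)(ii), the class of $(x_1-x_2x_3)(x_4+x_5)$ has two different representations: $[x_1-x_2x_3][x_1+x_2]$ with $\varepsilon=1$, and $[-(x_1x_2-x_3)][x_1+x_2]$ with $\varepsilon=-1$.
\end{itemize}
So if $\overline{\mathcal{MB}}$ is read literally as the classes of monic second-type AE's, the uniqueness in (a)(iii), (c)(ii), (c)(iii), (d)(ii) and (d)(iii) is false. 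No argument that ``keeps track of monicity'' can establish it.

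The selection has to be made on classes. Since $\sigma\cdot(-g)=-(\sigma\cdot g)$, the map $[g]\mapsto[-g]$ is well defined. By the definition of the second type, it is a fixed-point-free involution of $\overline{\mathcal{B}_m(\times,\div)}$, $\overline{\mathcal{B}_m(+,-)}$ and $\overline{\mathcal{B}_m(+,-,\times)}$. You need these unions to be closed under negation; a single $\overline{\mathcal{B}_m(\ast)}$ need not be, e.g.\ $|\overline{\mathcal{B}_4(+)}|=3$ is odd. Read $\overline{\mathcal{MB}}$ as a fixed transversal of this involution. This is exactly what the identity $|\overline{\mathcal{MB}_m}|=\frac12|\overline{\mathcal{B}_m}|$ in the proof of Theorem~4.3 presupposes.

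Uniqueness of $\varepsilon$ then needs an actual argument. Suppose $\sigma\cdot(\varepsilon\tilde f_1\cdots\tilde f_r\tilde g)\equiv\varepsilon'\tilde f'_1\cdots\tilde f'_{r'}\tilde g'$, with all $[\tilde f_j],[\tilde f'_j]$ in the transversal. Write $\sigma\cdot\tilde f_j=\mu_j\hat f_j$ and $\tilde f'_j=\nu_j\hat f'_j$, with $\hat f_j,\hat f'_j$ monic and $\mu_j,\nu_j\in\{1,-1\}$. Uniqueness in Part~(c) of the Fundamental Theorem gives $r=r'$, a bijection $\pi$ with $\hat f_j\equiv\hat f'_{\pi(j)}$, $[\tilde g]=[\tilde g']$, and $\varepsilon\prod_j\mu_j=\varepsilon'\prod_j\nu_j$. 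Now $[\mu_j\hat f_j]=[\tilde f_j]$ and $[\nu_{\pi(j)}\hat f_j]=[\tilde f'_{\pi(j)}]$ are both the chosen representative of $\{[\hat f_j],[-\hat f_j]\}$. Since $[\hat f_j]\neq[-\hat f_j]$, this gives $\mu_j=\nu_{\pi(j)}$, hence $\varepsilon=\varepsilon'$.

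The same device handles (a)(iii), (c)(iii) and (d)(ii)--(iii). Where a third-type factor is present, the sign is simply absorbed by it.
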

With these unique representations established, we proceed to the proof of our main theorem:
\begin{theorem}
Let $n\geqslant 2$. Then:

\

\begin{flushleft}
  $\begin{aligned}
  \textup{(a)}\qquad&|\overline{\mathcal{F}_n(+)}|
            =\omega_n'(\overline{\mathcal{F}(\times,\div)})\\
      \qquad&|\overline{\mathcal{B}_n(+)}|
            =\omega_n'(\overline{\mathcal{A}(\times,\div)})-|\overline{\mathcal{F}_n(+)}|-|\overline{\mathcal{C}_n(+)}|\\
      \qquad&|\overline{\mathcal{C}_n(+)}|
      =\omega_n'(\overline{\mathcal{C}(\times,\div)})+
\sum_{k=1}^{\lfloor \frac{n}{2}\rfloor}\omega_k(\overline{\mathcal{MB}(\times,\div)})\omega_{n-2k}(\overline{\mathcal{C}(\times,\div)})\\
\end{aligned}$
\end{flushleft}
\begin{flushleft}
  $\begin{aligned}
  \textup{(b)}\qquad&|\overline{\mathcal{F}_n(-)}|=0\\
  \qquad&|\overline{\mathcal{B}_n(-)}|=\sum_{k=1}^{n-1}|\overline{\mathcal{A}_k(+,\times,\div)}||\overline{\mathcal{F}_{n-k}(+,\times,\div)}|
    -|\overline{\mathcal{C}_n(-)}|\\
  \qquad&|\overline{\mathcal{C}_n(-)}|=\sum_{k=1}^{\lfloor\frac{n}{2}\rfloor}|\overline{\mathcal{C}_{n-2k}(+,\times,\div)}|
  |\overline{\mathcal{F}_k(+,\times,\div)}|\\
  \end{aligned}$
\end{flushleft}
\begin{flushleft}
  $\begin{aligned}
  \textup{(c)}\qquad&|\overline{\mathcal{F}_n(\times)}|
  =\omega_n'(\overline{\mathcal{F}(+)})+\sum_{k=0}^{n-1}\omega_k(\overline{\mathcal{F}(+)})\\
  \qquad&|\overline{\mathcal{B}_n(\times)}|
  =2\omega_{n}'(\overline{\mathcal{MB}(+,-)})+\sum_{k=1}^{n-1}
  2\omega_k(\overline{\mathcal{MB}(+,-)})|\overline{\mathcal{F}_{n-k}(+,\times)}|\
  \\
  \qquad&|\overline{\mathcal{C}_n(\times)}|
  =\omega_{n}'(\overline{\mathcal{C}(+,-)})+\sum_{k=1}^{n-1}
  \textstyle\omega_k(\overline{\mathcal{C}(+,-)})\big(|\overline{\mathcal{F}_{n-k}(+,\times)}|
  +\frac{1}{2}|\overline{\mathcal{B}_{n-k}(+,-,\times)}|\big)\\
  \end{aligned}$
\end{flushleft}
\begin{flushleft}
  $\begin{aligned}
  \textup{(d)}\qquad&|\overline{\mathcal{F}_n(\div)}|
  =\sum_{k=1}^{n-1}|\overline{\mathcal{F}_k(+,\times)}||\overline{\mathcal{F}_{n-k}(+,\times)}|\\
  \qquad&|\overline{\mathcal{B}_n(\div)}|=\sum_{k=1}^{n-1}|\overline{\mathcal{B}_{n-k}(+,-,\times)}|\textstyle\big(2|\overline{\mathcal{F}_k(+,\times)}|
+\frac{1}{2}|\overline{\mathcal{B}_k(+,-,\times)}|\big)\\
  \qquad&|\overline{\mathcal{C}_n(\div)}|
  =\sum_{k=1}^{n-1}|\mathcal{C}_{n-k}(+,-,\times)|\big(2|\mathcal{F}_k(+,\times)|
+|\mathcal{B}_k(+,-,\times)|+|\mathcal{C}_k(+,-,\times)|\big)
  \end{aligned}$
\end{flushleft}
\end{theorem}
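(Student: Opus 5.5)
The plan is to read each formula off the unique representations in Theorem~4.2, one category at a time. Each unique decomposition becomes a count, either through the colored weights problem (Lemma~4.1) when the pieces form an unordered multiset, or through a sum of products when the pieces are ordered (the roles of $g$ and $h$ in $g-h$ or $g/h$ are distinguishable). Throughout, the ``weights'' of size $k$ are the isomorphism classes in the relevant subfamily with $k$ variables. So for $\overline{\mathcal{F}(\times,\div)}$ we take $N(k)=|\overline{\mathcal{F}_k(\times,\div)}|$, and similarly for the other families. The operation $[f]\ast[g]$ is well defined and compatible with $\ast$ (Proposition~1 of Appendix~B), so a multiset of classes whose sizes form a partition of $n$ determines exactly one class in $\overline{\mathcal{A}_n}$.

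\textbf{Parts (a) and (c).} For $\overline{\mathcal{F}_n(+)}$, Theorem~4.2(a)(ii) gives a bijection with multisets of classes in $\overline{\mathcal{F}(\times,\div)}$ over non-trivial partitions, so the count is $\omega_n'$. Part (a)(i) gives the same bijection for all of $\overline{\mathcal{A}_n(+)}$ with $\omega_n'(\overline{\mathcal{A}(\times,\div)})$. The $\mathcal{B}$ count then follows by subtraction, since the three types partition $\mathcal{A}_n(+)$ and type is permutation invariant.

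For $\overline{\mathcal{C}_n(+)}$, split into the three cases of (a)(iii):
\begin{itemize}
\item all summands of the third type, counted by $\omega_n'(\overline{\mathcal{C}(\times,\div)})$;
\item $k$ pairs $[g_j]+[-g_j]$ together with third-type summands of total size $n-2k$.
\end{itemize}
In the second case each pair is encoded by the monic class of $g_j$, and a pair has size $m_j$ but occupies $2m_j$ variables. The pairs therefore contribute $\omega_k(\overline{\mathcal{MB}(\times,\div)})$, the remaining summands contribute $\omega_{n-2k}(\overline{\mathcal{C}(\times,\div)})$, and $\omega_0=1$ absorbs the pure-pairs case $k=n/2$.

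Part (c) is analogous:
\begin{itemize}
\item $[x_1]^m$ times a multiset from $\mathcal{F}(+)$ of size $n-m$, which gives $\sum_{k=0}^{n-1}\omega_k$, with the term $k=0$ being the pure power of $x_1$;
\item for $\mathcal{B}$ and $\mathcal{C}$, the factor $2$ (respectively $\tfrac12$) accounts for the sign $\varepsilon$, and for the fact that a class in $\overline{\mathcal{B}}$ corresponds to exactly half of a monic/non-monic pair. Concretely, I will use that $|\overline{\mathcal{MB}_k}|=\tfrac12|\overline{\mathcal{B}_k}|$, because $f\mapsto -f$ is a fixed-point-free involution on second-type classes that swaps monic and non-monic representatives.
\end{itemize}

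\textbf{Parts (b) and (d).} These are ordered products. Here $[g]-[h]$ with $g$ of size $k$ ranges over $\overline{\mathcal{A}_k(+,\times,\div)}\times\overline{\mathcal{F}_{n-k}(+,\times,\div)}$, and this counts all of $\overline{\mathcal{A}_n(-)}$. Since $\mathcal{F}_n(-)=\varnothing$ (a negation of $g-h$ is always an AE), the $\mathcal{B}$ count is this total minus $|\overline{\mathcal{C}_n(-)}|$. The latter is read off (b)(ii), with the convention $\mathcal{C}_0=\{0\}$ covering $[h]-[h]$.

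For (d), the case split of Theorem~4.2(d)(ii)--(iii) multiplies out directly, using $|\overline{\mathcal{MB}}|=\tfrac12|\overline{\mathcal{B}}|$ and a factor $2$ for $\varepsilon$. For example, in $\mathcal{B}(\div)$ the first form contributes $2\cdot\tfrac12|\overline{\mathcal{B}_{n-k}}|\,(|\overline{\mathcal{F}_k}|+\tfrac12|\overline{\mathcal{B}_k}|)$ and the second contributes $|\overline{\mathcal{F}_k}|\cdot2\cdot\tfrac12|\overline{\mathcal{B}_{n-k}}|$. After reindexing $k\leftrightarrow n-k$ these combine into the stated expression.

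\textbf{Main obstacle.} The delicate step is the bookkeeping of signs for second-type pieces. I must verify the following:
\begin{itemize}
\item the classes $[\varepsilon f_1]$ with $\varepsilon=\pm1$ are genuinely distinct, and distinct from other choices, so the factor $2$ is not an overcount;
\item in (a)(iii) the pairs are counted once per unordered pair $\{[g],[-g]\}$;
\item the halving identity $|\overline{\mathcal{MB}_k}|=\tfrac12|\overline{\mathcal{B}_k}|$ holds, i.e.\ each second-type class contains monic members and its negative class contains the non-monic ones.
\end{itemize}
I would settle all three using the uniqueness clauses of Theorem~4.2 and the invariance of type and monicity under the relevant operations, before assembling the formulas.
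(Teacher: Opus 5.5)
Your plan is the paper's proof. Each formula is read off the unique representations of Theorem~4.2, using Lemma~4.1 for the unordered summands and factors, sums of products for the ordered pairs in $g-h$ and $g/h$, a factor $2$ for $\varepsilon$, and the identity $|\overline{\mathcal{MB}}|=\tfrac12|\overline{\mathcal{B}}|$, which the paper only asserts with ``Observe that''.

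The step that fails as you propose it is your justification of this identity. Monicity is not invariant under permutations of the variables: in the paper's own example, $x_2-x_3x_4$ is monic but the isomorphic $x_2-x_1x_3$ is not. So it is false that a second-type class consists of monic expressions and its negative of non-monic ones. For instance, $[x_1-x_2x_3]$ contains the monic $x_1-x_2x_3$, while $[-(x_1-x_2x_3)]=[x_2x_3-x_1]$ contains the monic $x_1x_3-x_2$. The same happens for the other two pairs, so all six classes of $\overline{\mathcal{B}_3(+,-)}$ contain monic members, whereas the paper's worked example needs $|\overline{\mathcal{MB}_3(+,-)}|=3$. ``Invariance of monicity'' therefore cannot settle the three checks you list.

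Argue on classes instead. The map $[g]\mapsto[-g]$ is well defined because $\sigma\cdot(-g)=-(\sigma\cdot g)$. It sends second-type classes to second-type classes, and it has no fixed points by the definition of the second type. So on any negation-closed family it pairs the classes off into exactly $\tfrac12|\overline{\mathcal{B}_k}|$ unordered pairs $\{[g],[-g]\}$. Read $\overline{\mathcal{MB}}$ as a set of representatives, one class from each pair; this is all that the uniqueness statements of Theorem~4.2 and your counts need.

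Your factor $2$ is then justified as you intend. Changing the sign of two second-type factors (or of both numerator and denominator) leaves the function unchanged. So the class of a product or quotient is determined by the multiset of pairs and one overall sign, and the two signs give $[f]\neq[-f]$ because $f$ is of the second type.

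Finally, apply the halving only to negation-closed families, namely $\mathcal{B}(\times,\div)$, $\mathcal{B}(+,-)$ and $\mathcal{B}(+,-,\times)$, which are the only ones the formulas use. It fails operator by operator, contrary to the paper's ``for every $\ast$''. For example, $-(x_1+x_2(x_3-x_4))=x_2(x_4-x_3)-x_1$ ends with $-$, and indeed $|\overline{\mathcal{B}_4(+)}|=3$ is odd.
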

\begin{proof}
(a) By Theorem~4.2(a.i), $|\overline{\mathcal{A}_n(+)}|$ is the number of sums of the form
\[
[f_1]+\cdots+[f_r],
\]
where $f_j\in\mathcal{A}_{k_j}(\times,\div)$ for each $j$ and where $(k_1,\ldots,k_r)$ is a non-trivial partition of $n$. Since
\[
\overline{\mathcal{A}(\times,\div)}=\overline{\mathcal{A}_1(\times,\div)}
\cupdot\overline{\mathcal{A}_2(\times,\div)}\cupdot\overline{\mathcal{A}_3(\times,\div)}\cupdot\cdots,
\]
it follows that $|\overline{\mathcal{A}_n(+)}|$ is the number of non-trivial ways to create the weight $n$ using the ``weights" in $\overline{\mathcal{A}(\times,\div)}$. Thus, by Lemma~4.1,
\[
|\overline{\mathcal{A}_n(+)}|=\omega_n'(\overline{\mathcal{A}(\times,\div)}).
\]
Similarly, by Theorem~4.2(a.ii), $|\overline{\mathcal{F}_n(+)}|=\omega_n'(\overline{\mathcal{F}(\times,\div)})$, so $|\mathcal{B}_n(+)|=\omega_n'(\overline{\mathcal{A}(\times,\div)})-|\overline{\mathcal{F}_n(+)}|
-|\overline{\mathcal{C}_n(+)}|$, as required.

By Theorem~4.2(a.iii), $|\overline{\mathcal{C}_n(+)}|$ is the number of sums which have one of the following forms:
\begin{gather*}
   [f_1]+\cdots+[f_r], \\
  [g_1]+[-g_1]+\cdots+[g_s]+[-g_s], \\
  [f_1]+\cdots+[f_r]+[g_1]+[-g_1]+\cdots+[g_s]+[-g_s].\\
\end{gather*}
In the first form, each $f_j\in\mathcal{C}_{k_j}(\times,\div)$ and $(k_1,\ldots,k_r)$ is a non-trivial partition of $n$. In the second form each $g_j\in\mathcal{MB}_{m_j}(\times,\div)$ and $(m_1,\ldots,m_s)$ is a partition of $n/2$. In the third form $f_j\in\mathcal{C}_{k_j}(\times,\div)$ and $g_j\in\mathcal{MB}_{m_j}(\times,\div)$ for each $j$, where $(m_1,\ldots,m_s)$ is a partition of some $1\leqslant k<\frac{n}{2}$ and $(k_1,\ldots,k_r)$ is a partition of $n-2k$. Hence, the number of AE's of the first form is $\omega_n'(\overline{\mathcal{C}(\times,\div)})$. The number of AE's of the second form is $\omega_{\frac{n}{2}}(\overline{\mathcal{MB}(\times,\div)})$ and the number of AE's of the third form is
\[
\sum_{1\leqslant k<\frac{n}{2}}\omega_k(\overline{\mathcal{MB}(\times,\div)})\omega_{n-2k}(\overline{\mathcal{C}(\times,\div)}).
\]
Note that since $\omega_0(\overline{\mathcal{C}(\times,\div)})=1$, the number of AE's of the last two forms can be combined. Hence, the total number of AE's in $|\overline{\mathcal{C}_n(+)}|$ is
\[
|\overline{\mathcal{C}_n(+)}|=
\omega_n'(\overline{\mathcal{C}(\times,\div)})+
\sum_{k=1}^{\lfloor\frac{n}{2}\rfloor}\omega_k(\overline{\mathcal{MB}(\times,\div)})\omega_{n-2k}(\overline{\mathcal{C}(\times,\div)}),
\]
as required.

\

(b) Clearly, $|\mathcal{F}_n(-)|=0$. By Theorem~4.2(b.i), $|\overline{\mathcal{A}_n(-)}|$ is the number of differences of the form $[g]-[h]$, where $g\in\mathcal{A}_k(+,\times,\div)$, $h\in\mathcal{F}_{n-k}(+,\times,\div)$, and $1\leqslant k\leqslant n-1$. Hence,
\[
|\overline{\mathcal{A}_n(-)}|=\sum_{k=1}^{n-1}|\overline{\mathcal{A}_k(+,\times,\div)}||\overline{\mathcal{F}_{n-k}(+,\times,\div)}|,
\]
so $|\overline{\mathcal{B}_n(-)}|=|\overline{\mathcal{A}_n(-)}|-|\overline{\mathcal{C}_n(-)}|$, as required. Furthermore, by Theorem~4.2(b.ii), $|\overline{\mathcal{C}_n(-)}|$ is the number of differences which have one of the following forms
\[
[h]-[h]\quad\hbox{or}\quad[g]+[h]-[h],
\]
where in the first form $h\in\mathcal{F}_{\frac{n}{2}}(+,\times,\div)$, and in the second form $h\in\mathcal{F}_{k}(+,\times,\div)$, $g\in\mathcal{C}_{n-2k}(+,\times,\div)$ for some $1\leqslant k<\frac{n}{2}$. Recall that $|\mathcal{C}_0(+,\times,\div)|=1$, so combining these two cases yields
\[
|\overline{\mathcal{C}_n(-)}|=\sum_{k=1}^{\lfloor\frac{n}{2}\rfloor}|\overline{\mathcal{C}_{n-2k}(+,\times,\div)}||\overline{\mathcal{F}_{k}(+,\times,\div)}|,
\]
as required.

\

(c) By Theorem~4.2(c.i), $|\overline{\mathcal{F}_n(\times)}|$ is the number of products which have one of the following forms:
\begin{gather*}
   \overbrace{[x_1][x_1]\cdots[x_1]}^{\hbox{\tiny $n$ times}}, \\
  [f_1][f_2]\cdots[f_r], \\
  [f_1][f_2]\cdots[f_{r-m}]\underbrace{[x_1]\cdots[x_1]}_{\hbox{\tiny $m$ \textup{times}}}.
\end{gather*}
In the second form, each $f_j\in\mathcal{F}_{k_j}(+)$ and $(k_1,\ldots,k_r)$ is a non-trivial partition of $n$. In the third form, each $f_j\in\mathcal{F}_{k_j}(+)$ and $(k_1,\ldots,k_{r-m})$ is a partition of $n-m$ for some $1\leqslant m\leqslant n-1$. Hence, the number of AE's of the second form is $\omega_n'(\overline{\mathcal{F}(+)})$ and the number of AE's of the third form is $\omega_{n-m}(\overline{\mathcal{F}(+)})$. Note that since $\omega_0(\overline{\mathcal{F}(+)})=1$, the last result agrees with the sole case where all the factors are atomic. Therefore, the total number of AE's in $\overline{\mathcal{F}_n(\times)}$ is
\[
|\overline{\mathcal{F}_n(\times)}|=\omega_n'(\overline{\mathcal{F}(+)})+\sum_{m=1}^{n}\omega_{n-m}(\overline{\mathcal{F}(+)})=
\omega_n'(\overline{\mathcal{F}(+)})+\sum_{k=0}^{n-1}\omega_{k}(\overline{\mathcal{F}(+)}),
\]
as required.

Next, by Theorem~4.2(c.ii), $|\overline{\mathcal{B}_n(\times)}|$ is the number of products which have one of the following forms:
\[
   [\varepsilon f_1]\cdots[f_r] \quad\text{or}\quad[\varepsilon f_1]\cdots[f_r][g].
\]
In the first form, $\varepsilon\in\{-1,1\}$, each $f_j\in\mathcal{MB}_{k_j}(+,-)$, and $(k_1,\ldots,k_r)$ is a non-trivial partition of $n$. In the second form, $\varepsilon\in\{-1,1\}$, each $f_j\in\mathcal{MB}_{k_j}(+,-)$, $(k_1,\ldots,k_r)$ is a partition of $k$ for some $1\leqslant k\leqslant n-1$ and $g\in\mathcal{F}_{n-k}(+,\times)$. Hence, the number of AE's in the first form is $2\omega_n'(\overline{\mathcal{MB}(+,-)})$, and in the second form is $2\omega_k(\overline{\mathcal{MB}(+,-)})|\overline{\mathcal{F}_{n-k}(+,\times)}|$. Therefore, the total number of AE's in $\overline{\mathcal{B}_n(\times)}$ is
\[
|\overline{\mathcal{B}_n(\times)}|=2\omega_n'(\overline{\mathcal{MB}(+,-)})+\sum_{k=1}^{n-1}
2\omega_k(\overline{\mathcal{MB}(+,-)})|\overline{\mathcal{F}_{n-k}(+,\times)}|,
\]
as required.

Finally, by Theorem~4.2(c.iii), $|\overline{\mathcal{C}_n(\times)}|$ is the number of products which have one of the following forms:
\begin{gather*}
  [f_1][f_2]\cdots[f_r], \\
  [f_1][f_2]\cdots[f_r][g],\\
  [f_1][f_2]\cdots[f_r][h].
\end{gather*}
In the first form, each $f_j\in\mathcal{C}_{k_j}(+,-)$ and $(k_1,\ldots,k_r)$ is a non-trivial partition of $n$. In the second form, each $f_j\in\mathcal{C}_{k_j}(+,-)$, $(k_1,\ldots,k_r)$ is a partition of $k$ for some $1\leqslant k\leqslant n-1$ and $g\in\mathcal{F}_{n-k}(+,\times)$. In the third form, each $f_j\in\mathcal{C}_{k_j}(+,-)$, $(k_1,\ldots,k_r)$ is a partition of $k$ for some $1\leqslant k\leqslant n-1$ and $h\in\mathcal{MB}_{n-k}(+,-)$. Hence, the number of AE's in the first form is $\omega_n'(\overline{\mathcal{C}(+,-)})$ and the number of AE's in the second form is
\[
\sum_{k=1}^{n-1}\omega_k(\overline{\mathcal{C}(+,-)})|\overline{\mathcal{F}_{n-k}(+,\times)}|.
\]
Observe that $|\overline{\mathcal{MB}_m(\ast)}|=\frac{1}{2}|\overline{\mathcal{B}_m(\ast)}|$ for every $m\geqslant1$ and for every $\ast\in\{+,-,\times,\div\}$. Thus, the number of AE's in the third form is
\[
\sum_{k=1}^{n-1}\omega_k(\overline{\mathcal{C}(+,-)})|\overline{\mathcal{MB}_{n-k}(+,-,\times)}|
=\sum_{k=1}^{n-1}\omega_k(\overline{\mathcal{C}(+,-)})\textstyle\frac{1}{2}|\overline{\mathcal{B}_{n-k}(+,-,\times)}|.
\]
Therefore, the total number of AE's in $\overline{\mathcal{C}_n(\times)}$ is
\[
|\overline{\mathcal{C}_n(\times)}|=\omega_n'(\overline{\mathcal{C}(+,-)})+\sum_{k=1}^{n-1}
\omega_k(\overline{\mathcal{C}(+,-)})\big(|\overline{\mathcal{F}_{n-k}(+,\times)}|
+\textstyle\frac{1}{2}|\overline{\mathcal{B}_{n-k}(+,-,\times)}|\big),
\]
as required.

\

(d) By Theorem~4.2(d.i), $|\overline{\mathcal{F}_n(\div)}|$ is the number of quotients of the form
\[
[g]/[h],
\]
where $g\in\mathcal{F}_k(+,\times)$ and $h\in\mathcal{F}_{n-k}(+,\times)$ for some $1\leqslant k\leqslant n-1$. Hence,
\[
|\overline{\mathcal{F}_n(\div)}|=\sum_{k=1}^{n-1}|\overline{\mathcal{F}_k(+,\times)}||\overline{\mathcal{F}_{n-k}(+,\times)}|,
\]
as required.

Similarly, by Theorem~4.2(d.ii) $|\overline{\mathcal{B}_n(\div)}|$ is the number of quotients which have one of the following forms:
\[
[\varepsilon g]/[h]\quad\text{or}\quad
[g]/[\varepsilon h].
\]
In the first form, $\varepsilon\in\{1,-1\}$, $g\in\mathcal{MB}_{n-k}(+,-,\times)$, and $h\in\mathcal{F}_k(+,\times)\cup\mathcal{MB}_k(+,-,\times)$ for some $1\leqslant k\leqslant n-1$. In the second form, $\varepsilon\in\{1,-1\}$, $g\in\mathcal{F}_k(+,\times)$, and $h\in\mathcal{MB}_{n-k}(+,-,\times)$ for some $1\leqslant k\leqslant n-1$. Hence,
\[
\begin{split}
  |\overline{\mathcal{B}_n(\div)}|=\sum_{k=1}^{n-1}&2|\overline{\mathcal{MB}_{n-k}(+,-,\times)}|
  \big(|\overline{\mathcal{F}_k(+,\times)}|+|\overline{\mathcal{MB}_k(+,-,\times)}|\big)\\
&\qquad\qquad+\sum_{k=1}^{n-1}2|\overline{\mathcal{F}_k(+,\times)}||\overline{\mathcal{MB}_{n-k}(+,-,\times)}|.\\
\end{split}
\]
Since $|\overline{\mathcal{MB}_k(+,-,\times)}|=\frac{1}{2}|\overline{\mathcal{B}_k(+,-,\times)}|$, we obtain that
\[
|\overline{\mathcal{B}_n(\div)}|=\sum_{k=1}^{n-1}|\overline{\mathcal{B}_{n-k}(+,-,\times)}
|\textstyle(2|\overline{\mathcal{F}_k(+,\times)}|
+\frac{1}{2}|\overline{\mathcal{B}_k(+,-,\times)}|),
\]
as required.

Finally, by Theorem~4.2(d.iii), $|\overline{\mathcal{C}_n(\div)}|$ is the number of quotients of the form
\[
[g]/[h],
\]
where either $g\in\mathcal{C}_{n-k}(+,-,\times)$ and $h\in\mathcal{F}_k(+,\times)\cup\mathcal{MB}_k(+,-,\times)\cup\mathcal{C}_k(+,-,\times)$, or
$g\in\mathcal{F}_k(+,\times)\cup\mathcal{MB}_k(+,-,\times)$ and $h\in\mathcal{C}_{n-k}(+,-,\times)$ for some $1\leqslant k\leqslant n-1$. Hence,
\[
\begin{split}
  |\overline{\mathcal{C}_n(\div)}|
  &=\sum_{k=1}^{n-1}|\mathcal{C}_{n-k}(+,-,\times)|\big(|\mathcal{F}_k(+,\times)|+|\mathcal{MB}_k(+,-,\times)|+
|\mathcal{C}_k(+,-,\times)|\big)\\
&\qquad\qquad+\sum_{k=1}^{n-1}\big(|\mathcal{F}_k(+,\times)|+|\mathcal{MB}_k(+,-,\times)|\big)|\mathcal{C}_{n-k}(+,-,\times)|\\
&=\sum_{k=1}^{n-1}|\mathcal{C}_{n-k}(+,-,\times)|\big(2|\mathcal{F}_k(+,\times)|
+|\mathcal{B}_k(+,-,\times)|+|\mathcal{C}_k(+,-,\times)|\big),\\
\end{split}
\]
as required.
\end{proof}

We conclude this section with some tables describing the number of non-isomorphic AE's with $n$ variables for $n\in\{1,2,\ldots,6\}$.
\[
\begin{array}{c||c|c|c|c||c|}
                & \overline{\mathcal{A}_1(+)} & \overline{\mathcal{A}_1(-)} & \overline{\mathcal{A}_1(\times)} & \overline{\mathcal{A}_1(\div)} & \hbox{Total}\\ \hline\hline
  \hbox{Type 1} &      0            &    0              &   1                    & 0 & 1\\ \hline
  \hbox{Type 2} &      0            &    0              &    0                   & 0 & 0\\ \hline
  \hbox{Type 3} &      0            &     0             &     0                  & 0 & 0\\ \hline\hline
  \hbox{Total}  &      0            &     0             &     1                  & 0 & \mathbf{1}\\ \hline
\end{array}
\]
\[
\begin{array}{c||c|c|c|c||c|}
                & \overline{\mathcal{A}_2(+)} & \overline{\mathcal{A}_2(-)} & \overline{\mathcal{A}_2(\times)} & \overline{\mathcal{A}_2(\div)} & \hbox{Total}\\ \hline\hline
  \hbox{Type 1} & 1 & 0 & 1 & 1 & 3\\ \hline
  \hbox{Type 2} & 0 &    0              &     0                   & 0 & 0\\ \hline
  \hbox{Type 3} & 0            &     1             &     0                  & 0 & 1\\ \hline\hline
  \hbox{Total}  & 1            &     1             &     1                  & 1 & \mathbf{4}\\ \hline
\end{array}
\]
\[
\begin{array}{c||c|c|c|c||c|}
                & \overline{\mathcal{A}_3(+)} & \overline{\mathcal{A}_3(-)} & \overline{\mathcal{A}_3(\times)} & \overline{\mathcal{A}_3(\div)} & \hbox{Total}\\ \hline\hline
  \hbox{Type 1} &  3 & 0 & 2 & 4 & 9\\ \hline
  \hbox{Type 2} &  0 & 6 & 0 & 0 & 6\\ \hline
  \hbox{Type 3} &  0 & 0 & 1 & 2 & 3\\ \hline\hline
  \hbox{Total}  &  3 & 6 & 3 & 6 & \mathbf{18}\\ \hline
\end{array}
\]
\[
\begin{array}{c||c|c|c|c||c|}
                & \overline{\mathcal{A}_4(+)} & \overline{\mathcal{A}_4(-)} & \overline{\mathcal{A}_4(\times)} & \overline{\mathcal{A}_4(\div)} & \hbox{Total}\\ \hline\hline
  \hbox{Type 1} &  12 & 0  & 6 & 14 & 32\\ \hline
  \hbox{Type 2} &  3  & 27 & 6 & 12 & 48\\ \hline
  \hbox{Type 3} &  0  & 3  & 3 & 7  & 13\\ \hline\hline
  \hbox{Total}  &  15  & 30  & 15 & 33 & \mathbf{93}\\ \hline
\end{array}
\]
\[
\begin{array}{c||c|c|c|c||c|}
                & \overline{\mathcal{A}_5(+)} & \overline{\mathcal{A}_5(-)} & \overline{\mathcal{A}_5(\times)} & \overline{\mathcal{A}_5(\div}) & \hbox{Total}\\ \hline\hline
  \hbox{Type 1} & 44 & 0   & 21 & 56  & 121 \\ \hline
  \hbox{Type 2} & 37 & 155 & 42 & 96  & 330  \\ \hline
  \hbox{Type 3} & 0  & 3   & 12 & 34  & 49 \\ \hline\hline
  \hbox{Total}  & 81 & 158 & 75 & 186 & \mathbf{500} \\ \hline
\end{array}
\]
\[
\begin{array}{c||c|c|c|c||c|}
                & \overline{\mathcal{A}_6(+)} & \overline{\mathcal{A}_6(-)} & \overline{\mathcal{A}_6(\times)} & \overline{\mathcal{A}_6(\div)} & \hbox{Total}\\ \hline\hline
\hbox{Type 1} & 186 & 0 & 84 & 227 & 497 \\ \hline
\hbox{Type 2} & 295 & 837 & 294 & 690 & 2116 \\ \hline
\hbox{Type 3} &6 & 19 & 51 & 155 & 231 \\ \hline
\hbox{Total}  & 487 & 856 & 429 & 1072 & \mathbf{2844} \\ \hline
\end{array}
\]
As an illustrative example, we shall demonstrate the computation of $|\overline{\mathcal{F}_6(+)}|$, $|\overline{\mathcal{C}_6(-)}|$, and $|\overline{\mathcal{B}_6(\times)}|$ using the formulas of Theorem~4.3. First, the partitions of $n=6$ are:
\begin{gather*}
  (1^6)\quad(1^4,2^1)\quad(1^2,2^2)\quad(2^3)\quad(1^3,3^1) \\
  (1^1,2^1,3^1)\quad(1^2,4^1)\quad(3^2)\quad(2^1,4^1)\quad(1^1,5^1)\quad(6^1)
\end{gather*}
By Theorem~4.3(a) we obtain that
\[
\begin{split}
  |\overline{\mathcal{F}_6(+)}|&
  =\omega_6'(\overline{\mathcal{F}(\times,\div)})
  =\textstyle\multiset{|\overline{\mathcal{F}_1(\times,\div)}|}{6}+
  \multiset{|\overline{\mathcal{F}_1(\times,\div)}|}{4}\multiset{|\overline{\mathcal{F}_2(\times,\div)}|}{1}\\
  &\textstyle+\multiset{|\overline{\mathcal{F}_1(\times,\div)}|}{2}\multiset{|\overline{\mathcal{F}_2(\times,\div)}|}{2}
  +\multiset{|\overline{\mathcal{F}_2(\times,\div)}|}{3}
  +\multiset{|\overline{\mathcal{F}_1(\times,\div)}|}{3}\multiset{|\overline{\mathcal{F}_3(\times,\div)}|}{1}\\
  &\textstyle+\multiset{|\overline{\mathcal{F}_1(\times,\div)}|}{1}\multiset{|\overline{\mathcal{F}_2(\times,\div)}|}{1}
  \multiset{|\overline{\mathcal{F}_3(\times,\div)}|}{1}+\multiset{|\overline{\mathcal{F}_1(\times,\div)}|}{2}\multiset{|\overline{\mathcal{F}_4(\times,\div)}|}{1}\\
  &\textstyle+\multiset{|\overline{\mathcal{F}_3(\times,\div)}|}{2}
  +\multiset{|\overline{\mathcal{F}_2(\times,\div)}|}{1}\multiset{|\overline{\mathcal{F}_4(\times,\div)}|}{1}
  +\multiset{|\overline{\mathcal{F}_1(\times,\div)}|}{1}\multiset{|\overline{\mathcal{F}_5(\times,\div)}|}{1}.
\end{split}
\]
Note that $|\overline{\mathcal{F}_k(\times,\div)}|=|\overline{\mathcal{F}_k(\times)}|+|\overline{\mathcal{F}_k(\div)}|$ for every $k$.
Based on the results in the previous tables we obtain that
\[
\begin{split}
  |\overline{\mathcal{F}_6(+)}|&
  =\overbrace{\dispmultiset{1}{6}}^{1}+
  \overbrace{\dispmultiset{1}{4}\dispmultiset{2}{1}}^{1\cdot2}+\overbrace{\dispmultiset{1}{2}\dispmultiset{2}{2}}^{1\cdot3}
  +\overbrace{\dispmultiset{2}{3}}^{4}
  +\overbrace{\dispmultiset{1}{3}\dispmultiset{6}{1}}^{1\cdot6}\\
  &\quad+\overbrace{\dispmultiset{1}{1}\dispmultiset{2}{1}
  \dispmultiset{6}{1}}^{1\cdot2\cdot6}+\overbrace{\dispmultiset{1}{2}\dispmultiset{20}{1}}^{1\cdot20}
  +\overbrace{\dispmultiset{6}{2}}^{21}+\overbrace{\dispmultiset{2}{1}\dispmultiset{20}{1}}^{2\cdot20}
  +\overbrace{\dispmultiset{1}{1}\dispmultiset{77}{1}}^{1\cdot77}\\
  &=186,
\end{split}
\]
as required. Next, by Theorem~4.3(b) and based on the results in the previous tables, we obtain that
\[
\begin{split}
  |\overline{\mathcal{C}_6(-)}|&=\overbrace{|\overline{\mathcal{C}_4(+,\times,\div)}|}^{0+3+7}
  \overbrace{|\overline{\mathcal{F}_1(+,\times,\div)}|}^{0+1+0}
+\overbrace{|\overline{\mathcal{C}_2(+,\times,\div)}|}^{0+0+0}
\overbrace{|\overline{\mathcal{F}_2(+,\times,\div)}|}^{1+1+1}\\
&\qquad+\underbrace{|\overline{\mathcal{C}_0(+,\times,\div)}|}_{1}
\underbrace{|\overline{\mathcal{F}_3(+,\times,\div)}|}_{3+2+4}=19,
\end{split}
\]
as required. For our last computation, Theorem~4.3(c) implies that
\[
\begin{split}
  |\overline{\mathcal{B}_6(\times)}|
  &=2\omega_6'(\overline{\mathcal{MB}(+,-)})
  +2\omega_1(\overline{\mathcal{MB}(+,-)})|\overline{\mathcal{F}_5(+,\times)}|\\
  &\qquad\quad+2\omega_2(\overline{\mathcal{MB}(+,-)})|\overline{\mathcal{F}_4(+,\times)}|+\cdots
  +2\omega_5(\overline{\mathcal{MB}(+,-)})|\overline{\mathcal{F}_1(+,\times)}|.
\end{split}
\]
Note that similarly to the computation of $\omega_6'(\overline{\mathcal{F}(\times,\div)})$, we obtain that
\[
  \omega_6'(\overline{\mathcal{MB}(+,-)})
  =\textstyle\multiset{|\overline{\mathcal{MB}_1(+,-)}|}{6}
  +\cdots+\multiset{|\overline{\mathcal{MB}_1(+,-)}|}{1}\multiset{|\overline{\mathcal{MB}_5(+,-)}|}{1}.
\]
Since $|\overline{\mathcal{B}_1(+,-)}|=|\overline{\mathcal{B}_2(+,-)}|=0$, and since $\multiset{0}{k}=0$ for every $k>0$, the above sum reduces to
\[
\omega_6'(\overline{\mathcal{MB}(+,-)})=\dispmultiset{|\overline{\mathcal{MB}_3(+,-)}|}{2}
=\dispmultiset{\frac{1}{2}|\overline{\mathcal{B}_3(+,-)}|}{2}
=\dispmultiset{3}{2}=6.
\]
Similarly, the sum $\omega_k(\overline{\mathcal{MB}(+,-)})$ consists of summands that correspond to partitions of $k$ that use neither $1$ nor $2$. Since every non-trivial partition of $1\leqslant k\leqslant 5$ contains either $1$ or $2$, it follows that
\[
2\omega_k(\overline{\mathcal{MB}(+,-)})=2\dispmultiset{|\overline{\mathcal{MB}_k(+,-)}|}{1}=
2\dispmultiset{\frac{1}{2}|\overline{\mathcal{B}_k(+,-)}|}{1}=|\overline{\mathcal{B}_k(+,-)}|
\]
for each $1\leqslant k\leqslant 5$. Thus, $2\omega_1(\overline{\mathcal{MB}(+,-)})=0$, $2\omega_2(\overline{\mathcal{MB}(+,-)})=0$,
$2\omega_3(\overline{\mathcal{MB}(+,-)})=6$, $2\omega_4(\overline{\mathcal{MB}(+,-)})=30$, and $2\omega_5(\overline{\mathcal{MB}(+,-)})=192$. Therefore,
\[
  |\overline{\mathcal{B}_6(\times)}|=2\overbrace{\omega_6'(\overline{\mathcal{MB}(+,-)})}^{6}
  +6\overbrace{|\overline{\mathcal{F}_3(+,\times)}|}^{5}
  +30\overbrace{|\overline{\mathcal{F}_2(+,\times)}|}^{2}+192\overbrace{|\overline{\mathcal{F}_1(+,\times)}|}^{1}=294,
\]
as required.

To conclude this section, in the following table, we compare the values of $|\mathcal{A}_n|$ and $|\overline{\mathcal{A}_n}|$ for $1\leqslant n\leqslant 17$.
\[
\begin{array}{||l||l||l||}
  n & |\mathcal{A}_n| & |\overline{\mathcal{A}_n}|\\ \hline\hline
 1 & 1 & 1 \\
 2 & 6 & 4 \\
 3 & 68 & 18 \\
 4 & 1170 & 93 \\
 5 & 27142 & 500 \\
 6 & 793002 & 2844 \\
 7 & 27914126 & 16621 \\
 8 & 1150212810 & 99674 \\
 9 & 54326011414 & 608448 \\
10 & 2894532443154 & 3770744 \\
11 & 171800282010062 & 23653630\\
12 & 11243812043430330 & 149925328\\
13 & 804596872359480358 & 958737739\\
14 & 62506696942427106498 & 6178529510 \\
15 & 5239819196582605428254 & 40089044100\\
16 & 471480120474696200252970 & 261693178976\\
17 & 45328694990444455796547766 & 1717542967251\\
\end{array}
\]
The sequence $|\mathcal{A}_n|$ corresponds to the values in~\cite{Sloane_A140606}, exhibiting rapid superexponential growth. According to the asymptotic analysis of its exponential generating function, the growth rate is characterized by:
\[
|\mathcal{A}_n|\sim \left(\frac{n}{eb}\right)^n\frac{c\sqrt{b}}{n}
\]
with $b\approx 0.16142$ and $c\approx 1.87722$. The behavior of the quotient $R_n=|\mathcal{A}_n|/|\overline{\mathcal{A}_n}|$ is of particular interest. Because $|\overline{\mathcal{A}_n}|$ counts expressions where operand permutations are disregarded, it is natural to observe $R_n<n!$. Empirical data shows that this ratio grows significantly slower than the full factorial; specifically, the multiplier between successive terms, $M_n=R_n/R_{n-1}$, grows linearly with $n$. For example, $M_{16}\approx 13.78$ and $M_{17}\approx 14.65$. This linear progression, which follows the approximation $M_n\approx 0.86n$, suggests that $|\overline{\mathcal{A}_n}|$ constitutes a larger density than a simple $1/n!$ fraction, pointing toward an asymptotic relationship of the form $R_n\sim n!\cdot c^{-n}$ for a constant $c\approx 1.16$. Characterizing this relationship formally remains an open problem.
\section{The solution of the $21$-puzzle}
Among the $93$ non-isomorphic AE's in $\mathcal{A}_4$, the expression
\[
f=\frac{x_1}{x_2-\frac{x_3}{x_4}}
\]
is the unique AE (up to isomorphism) needed for the solution to the $21$-puzzle. Indeed,
\[
f(6,1,5,7)=\frac{6}{1-\frac{5}{7}}=21,
\]
as required.

\end{document}